\newtheorem{lemma}{Lemma}[section]
\newtheorem{theorem}[lemma]{Theorem}
\theoremstyle{definition}
\theoremstyle{remark}
\newtheorem{remark}[lemma]{Remark}
\DeclareMathOperator{\supp}{supp}
\newcommand{\dst}{\displaystyle}
\def\C{{\mathbb{C}}}
\renewcommand{\H}{{\mathrm{H}}}
\def\K{{\mathrm{K}}}
\def\N{{\mathbb{N}}}
\def\R{{\mathbb{R}}}
\def\Z{{\mathbb{Z}}}
\def\cc{{\mathcal C}}
\newcommand{\norm}[1]{{\left\|{#1}\right\|}}
\newcommand{\scal}[1]{{\left\langle{#1}\right\rangle}}
\DeclareMathOperator{\sinc}{sinc}
\newcommand{\ud}{\,\mathrm{d}}
\newcommand{\vep}{\varepsilon}
\renewcommand{\phi}{\varphi}
\numberwithin{equation}{section}
\begin{document}
\sloppy

\title[Oversampling and Donoho-Logan type theorems in model spaces]{Oversampling and Donoho-Logan type theorems in model spaces}

\author[A. Baranov]{Anton Baranov}
\address{St. Petersburg State University, Department of Mathematics and Mechanics, 28, Universitetskii prosp., 
198504 Staryi Petergof, Russia.}
\email{anton.d.baranov@gmail.com}

\author[P. Jaming]{Philippe Jaming}
\address{Univ. Bordeaux, CNRS, Bordeaux INP, IMB, UMR 5251, F-33400 Talence, France}

\email{philippe.jaming@math.u-bordeaux.fr}
\author[K. Kellay]{Karim Kellay}
\address{Univ. Bordeaux, CNRS, Bordeaux INP, IMB, UMR 5251, F-33400 Talence, France}

\email{kkellay@math.u-bordeaux.fr}
\author[M. Speckbacher]{Michael Speckbacher}

\address{Universit\"at Wien, Fakult\"at f\"ur Mathematik, Oskar-Morgenstern-Platz 1, 1090 Wien, Austria.}
\email{michael.speckbacher@univie.ac.at}

\keywords{Carleson measure, model space, oversampling formula, Bernstein's inequality}
\subjclass[2000]{Primary 30H05; Secondary 30D10, 30E05}

\begin{abstract}
The aim of this paper is to extend two results from the Paley--Wiener setting to more general
model spaces. The first one is an analogue of the oversampling Shannon sampling formula. The second one
is a version of the Donoho--Logan Large Sieve Theorem which is a quantitative estimate of the embedding of
the Paley--Wiener space into an $L^2(\R,\mu)$ space.
\end{abstract}

\date{}
\maketitle


\section{Introduction}

The aim of this paper is to extend two classical results on the Paley--Wiener space
to more general model spaces. The first result is the sampling theorem, or more
precisely the oversampling formula that improves the convergence in Shannon sampling.
The second result is the Donoho--Logan Large Sieve Principle which can be seen as one of the
first results on sparsity in signal processing. In the context
of complex analysis, this result is a result on Carleson measures for
the Paley--Wiener space.

Let us now be more precise. Recall that the Paley--Wiener space is the subspace of $L^2$ signals that are band-limited to $[-c,c]$. It is a very common space used to model signal encountered in natural sciences. 
If we normalize the Fourier transform through
$$
\widehat{f}(\xi)=\int_{\R}f(t)e^{-i t\xi}\,\mbox{d}t,
$$ 
the Paley--Wiener space $PW_c^p(\R)$, $c>0$, $1\le p <\infty$, is defined as
$$
PW_c^p(\R)=\Big\{f\in L^p(\R)\,:\ \supp\widehat{f}\subset[-c,c]\Big\},
$$
where, for $p>2$, $\widehat{f}$ is understood in the distributional sense.
Two well known properties of the Paley--Wiener spaces are that they consist of entire functions and that
 every function $f\in PW_c^2(\R)$ can be reconstructed
from  samples $\{f(\pi k/b)\}_{k\in\Z}$ via the Kotelnikov--Nyquist--Shannon Formula
\begin{equation}
\label{eq:shannon1}
f(x)=\sum_{k\in\Z}f\left(k\frac{\pi}{b}\right)\frac{1}{2b}\widehat{\gamma}\left(x-k\frac{\pi}{b}\right),
\end{equation}
where $b\geq c$ and $\gamma$ is any even function supported in $[-b,b]$ with $\gamma(\xi)=1$
for $\xi\in[-c,c]$. Taking $b\geq c$ and $\gamma=\mathbf{1}_{[-b,b]}$
we get the classical Shannon Formula
\begin{equation}
\label{eq:shannon2}
f(x)=\sum_{k\in\Z}f\left(k\frac{\pi}{b}\right)\sinc \left(b\left(x-k\frac{\pi}{b}\right)\right),
\end{equation}
where $\sinc t={\sin t}/{t}$. However, when $b>c$ one can do better by taking $\gamma$
smooth in which case $\widehat{\gamma}$ decreases faster than the sinc function. Most authors take $\gamma$ to be 
$\cc^\infty$ so that, for every $N$, $|\widehat{\gamma}\left(x-k\frac{\pi}{b}\right)|=O(k^{-N})$ when $k\to\pm\infty$
and this estimate is even uniform when $x$ stays in a compact set. As a consequence \eqref{eq:shannon1}
converges much better than \eqref{eq:shannon2}, a fact that is well known in signal processing.
The drawback of this is that, to the best of our knowledge, there is no example of a function $\gamma$
as above for which $\widehat{\gamma}$ is explicitly known.

One way to overcome this is to give up on arbitrarily fast decay and to fix $N$. One then fixes a parameter $a>0$,
and takes $\psi^{(1)}=\psi=\frac{1}{2a}\mathbf{1}_{[-a,a]}$ and $\psi^{(k+1)}=\psi^{(k)}*\psi$.
Then $\psi^{(N)}$ is supported in $[-Na,Na]$ and $\int_\R\psi^{(N)}=1$.
It follows that $\gamma=\psi^{(N)}*\mathbf{1}_{[-c-Na,c+Na]}$ is even, supported in $[-b,b]$ with $b=c+2Na$
and that $\gamma(\xi)=\int_\R\psi^{(N)}=1$ for $\xi\in[-c,c]$.
Computing $\widehat{\gamma}$, we get
\begin{equation}
\label{eq:shannon3}
f(x)=2(c+Na)\sum_{k\in\Z}f\left(k\frac{\pi}{b}\right)\left[\sinc\left( a\left(x-k\frac{\pi}{b}\right)\right)\right]^N\sinc \left((c+Na)\left(x-k\frac{\pi}{b}\right)\right).
\end{equation}

The second result we are dealing with in this paper is the Donoho--Logan Large Sieve theorem.
The analytic large sieve principle is a classical inequality for trigonometric polynomials which finds many 
applications in analytic number theory (see, {\it e.g.}, \cite{mont78} and references therein). It was extended
from trigonometric polynomials to their near cousins the band-limited functions by Donoho and Logan \cite{dolo92}
in the early 90s (after earlier work by Boas \cite{Bo}), applying it to reconstruction of missing data in signal processing.

For $I$ an interval, we denote by $|I|$ the length of $I$. When
$\mu$ is a positive $\sigma$-finite measure and $\delta>0$, we write
\begin{equation}
\label{eq:dmudelta}
D_\mu(\delta)=\sup\left\{\frac{\mu(I)}{|I|}\,:\ I\mbox{ closed interval, }|I|=\delta\right\}
=\sup_{x\in\R}\frac{\mu([x,x+\delta])}{\delta}.
\end{equation}
Donoho and Logan proved the following:

\begin{theorem}[Donoho--Logan]
Let $c,\delta>0$ and $\mu$ be a positive $\sigma$-finite measure. 
Then for every $f\in PW_c^2(\R)$,
\begin{equation}
\label{eq:ds}
\int_{\R}|f(x)|^2\,\ud\mu(x)\leq\left(1 + \frac{c\delta}{\pi}\right) 
D_\mu(\delta)\int_{\R}|f(x)|^2\,\ud x.
\end{equation}
Moreover, for every $f\in PW_c^1(\R)$,
\begin{equation}
\label{eq:ds2}
\int_{\R}|f(x)|\,\ud\mu(x)\leq \frac{D_\mu(\delta)}{\sinc\left(\dfrac{c\delta}{2}\right)}
\int_{\R}|f(x)|\,\ud x.
\end{equation}
\end{theorem}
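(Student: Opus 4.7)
The plan is to prove both inequalities by localization on intervals of length $\delta$: in each case, I would (i) decompose $|f|^p$ on $I_x := [x-\delta/2, x+\delta/2]$ into its mean plus a remainder, (ii) use Fubini to interchange the integrations over $\R$ and $I_x$ (which produces the factor $D_\mu(\delta)$), and (iii) apply a band-limitation estimate (Bernstein, or a Fourier multiplier construction) to control the remainder.

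For the $L^2$ inequality, the key observation is that $g(x):=|f(x)|^2$ extends to an entire function of exponential type $2c$, namely $g(z)=f(z)\overline{f(\bar z)}$; in particular $g\in L^1(\R)$ with $\|g\|_1=\|f\|_2^2$, and Bernstein's inequality controls $\|g'\|_1$. The basic identity is
\[
\int_\R g(x)\,\ud\mu(x)=\int_\R \frac{1}{\delta}\int_{I_x} g(y)\,\ud y\,\ud\mu(x)+\int_\R\left(g(x)-\frac{1}{\delta}\int_{I_x}g(y)\,\ud y\right)\ud\mu(x).
\]
By Fubini the first term is bounded by $\delta^{-1}\int g(y)\mu(I_y)\,\ud y\leq D_\mu(\delta)\|f\|_2^2$. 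For the remainder, the fundamental theorem of calculus yields the pointwise bound $\int_{I_x}|g'(t)|\,\ud t$, and a second application of Fubini converts its $\mu$-integral into a quantity of order $\delta D_\mu(\delta)\|g'\|_1$, which Bernstein then controls.

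For the $L^1$ inequality I would construct an auxiliary $F\in L^1(\R)$ such that $f(x)=\delta^{-1}\int_{I_x}F(y)\,\ud y$. Then $|f(x)|\leq \delta^{-1}\int_{I_x}|F(y)|\,\ud y$, and integrating against $\mu$ yields $\int|f|\,\ud\mu\leq D_\mu(\delta)\|F\|_1$. The existence of such $F$ follows from the multiplier identity $\hat F(\xi)=\hat f(\xi)/\sinc(\delta\xi/2)$ on $[-c,c]$, which is meaningful for $c\delta<2\pi$ because $\sinc(\delta\xi/2)\geq \sinc(c\delta/2)>0$ there. Writing $F=K\ast f$ for a suitable $K\in L^1(\R)$ with $\hat K|_{[-c,c]}=1/\sinc(\delta\xi/2)$ and $\|K\|_1=1/\sinc(c\delta/2)$, one gets $\|F\|_1\leq \|K\|_1\|f\|_1=\|f\|_1/\sinc(c\delta/2)$, from which the claimed bound follows.

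The main obstacle in each case is attaining the sharp constants. The naive Bernstein estimate $\|g'\|_1\leq 2c\|f\|_2^2$ yields only the constant $(1+2c\delta)$ in the $L^2$ case; the sharper $(1+c\delta/\pi)$ requires a more refined argument, for instance an appeal to the Plancherel--P\'olya inequality $\delta\sum_k|f(x_k)|^2\leq (1+c\delta/\pi)\|f\|_2^2$ valid for $\delta$-separated sequences $\{x_k\}$, from which the measure-theoretic version follows by approximating $\mu$ by suitable atomic measures. In the $L^1$ case, the construction of a multiplier $K\in L^1(\R)$ attaining the sharp norm $\|K\|_1=1/\sinc(c\delta/2)$ is the crux of the argument, and depends essentially on the positivity of $\sinc(\delta\xi/2)$ on $[-c,c]$ when $c\delta<2\pi$.
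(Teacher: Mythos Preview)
The paper does not prove this theorem; it is quoted from Donoho--Logan \cite{dolo92} as background, and the paper explicitly remarks that the original proof relies on the Beurling--Selberg construction of an entire function majorizing the sign function. The paper's own alternative arguments (oversampling in Section~\ref{Sec:4}, Bernstein's inequality in Section~\ref{Sec:5}) yield only weaker constants: specializing Theorem~\ref{th:lp} to $\Theta(z)=e^{2icz}$ gives $(1+2c\delta)^p$ rather than $1+c\delta/\pi$, as noted in Remark~\ref{remark}.

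Your $L^2$ argument via $g(z)=f(z)\overline{f(\bar z)}\in PW_{2c}^1$ and Bernstein is essentially the paper's Section~\ref{Sec:5} approach (in fact it gives the slightly better linear factor $1+2c\delta$ instead of $(1+2c\delta)^2$), and you correctly note it misses the sharp constant. But your proposed remedy, invoking the Plancherel--P\'olya inequality $\delta\sum_k|f(x_k)|^2\le(1+c\delta/\pi)\|f\|_2^2$, is circular: that inequality is precisely the Donoho--Logan bound for atomic $\mu$, and its usual proof already uses the Beurling--Selberg majorant of $\mathbf{1}_{[-\delta/2,\delta/2]}$. Likewise in $L^1$, your multiplier scheme is the right framework, but the key assertion --- that one can take $K\in L^1(\R)$ with $\widehat K=1/\sinc(\delta\xi/2)$ on $[-c,c]$ and $\|K\|_1$ \emph{equal} to $1/\sinc(c\delta/2)$ --- is stated without proof and is exactly the non-trivial extremal step; the trivial bound $\|K\|_1\ge\sup|\widehat K|$ goes the wrong way, and producing a $K$ attaining it is again a Beurling--Selberg type construction. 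So in both parts the real content of the sharp theorem is hidden in a step you leave unjustified.
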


This theorem is one of the founding results of the use of sparsity constraints in signal recovery problems and has led to considerable research in the decades following it. However, until recently, it seems that this result 
did not get the attention it deserves in other mathematical communities.
For instance, in complex analysis, the Donoho--Logan Large Sieve theorem 
provides a quantitative estimate of the fact that a
positive measure is a Carleson 
measure of the Paley--Wiener space, that is, a bound on the norm of the injection $PW_c^2(\R)\hookrightarrow L^2(\mu)$.
Note that a first estimate of this type was given by Lin \cite{L}. Recently, Husain and Littmann extended the result for $p=1$ to higher dimensions \cite{huli}.

This inadvertence may be due to the fact that the strategy of proof
has some rigidities that are difficult to overcome. Indeed, the proof relies heavily on
the interplay between convolution and the Fourier transform, and on a construction by Selberg (based on a
previous construction by Beurling) of an entire function majorizing the sign function, see \cite{mont78,Va}.
Despite its rigidity, this strategy of proof has recently been extended to more general setting,
see, {\it e.g.}, \cite{abspe18-sieve,abspe22,jasp1}.
Our first aim in this paper is to provide two new proof strategies,
one based on the oversampling formula, the second on Bernstein's inequality.
In particular, those proofs rely on real variable techniques only.
The price to be paid is that the numerical constants we obtain are slightly worse.
The proofs however offer more flexibility so that they apply in larger settings.

To illustrate this, we will extend the oversampling formula and the large sieve inequality
to the setting of model spaces on the upper half-plane,
an important family of spaces of holomorphic functions which contains the
Paley--Wiener space $PW_c^2(\R)$. Let us now describe this family.
First, let $\H^2$ be  the Hardy space on the upper half-plane  $\C^+:=\{z\in\C:\ \text{Im}\,z>0\}$. 
$$
\H^2:=\Big\{f\in \text{\rm Hol}(\C^+)\,:\ \sup_{ y>0}\int_{\R}|f(x+iy)|^2\mbox{d}x<\infty\Big\}.
$$
Note that we may identify $\H^2=\{f\in L^2(\R)\,:\ \supp \widehat{f}\subset[0,+\infty)\}$.
Let $\Theta$  be an inner function in $\C^+$,  that is, a bounded analytic function on $\C^+$ with unimodular boundary values almost everywhere on $\R$. 
The corresponding \emph{model space} is defined by
$$
\K_\Theta^2:=\H^2\cap(\Theta\H^2)^\bot.
$$
As a fundamental example, if $\Theta_{c}(z)= \exp(icz)$ for some $c>0$, then
$$
\K_{\Theta_{2c}}^2=\{f\in L^2(\R)\,:\ \supp \widehat{f}\subset[0,2c]\}
=\{\Theta_c f\,: f\in PW_c^2(\R)\}
$$
or, equivalently,
$$
PW_c^2(\R)=\{e^{-icz}f\,:\ f\in\K_{\Theta_{2c}}^2\}.
$$
Moreover, one can   define $\K_\Theta^p$ model spaces in $H^p$, $1\leq p\leq+\infty$, so as
to also cover the $PW_c^p$ spaces, {\it see} Section \ref{Sec:5}.
In the context of model spaces, an analogue of Shannon's sampling formula was established by de Branges \cite{de}
in the case of meromorphic inner functions and later by Clark \cite{C} in the general case, 
see \eqref{eq:clarkshan}. However, the oversampling formula for model spaces seems to be unknown.
Previously,   oversampling  results were obtained in \cite{STU,TU}  for specific
model spaces (or closely related de Branges spaces) associated with  certain differential operators. 

We start by establishing an oversampling formula for the  class of model spaces with meromorphic inner functions. This condition  is equivalent to
$\Theta'\in L^\infty(\R)$.
Once this is done, we propose a first proof of Donoho and Logan's theorem 
for model spaces associated with inner functions with bounded derivative,
an interesting class of model spaces that share many properties with the Paley--Wiener spaces (see,
{\it e.g.},  \cite{dyak,Ni}). 

\begin{theorem}
\label{th:intro2}
Let $\Theta$ be an  inner function such that $\Theta'\in L^\infty(\R)$, and $\mu$
be a $\sigma$-finite Borel measure on $\R$. For $\delta>0$, $D_\mu(\delta)$ defined in \eqref{eq:dmudelta}
and
$1\leq p<+\infty$ there exists $C_p>0$ such that  for every $f\in \K^p_\Theta$
\begin{equation}
\label{1}
\int_\R |f(x)|^p\,\mathrm{d}\mu(x) \leq 
 C_p\left(1+\|\Theta'\|_\infty\delta\right)^pD_\mu(\delta)
 \int_\R |f(x)|^p\,\mathrm{d}x.
\end{equation}
\end{theorem}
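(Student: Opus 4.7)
The plan is to bypass the Selberg--Beurling majorant apparatus entirely and derive the inequality by real-variable means: a local Sobolev-type estimate on intervals of length $\delta$, combined with a Bernstein-type inequality in $\K^p_\Theta$. The key external input is the Bernstein inequality for model spaces: under the assumption $\Theta'\in L^\infty(\R)$, there exists $B_p>0$ depending only on $p$ such that
$$\|f'\|_p \;\leq\; B_p\,\|\Theta'\|_\infty\,\|f\|_p, \qquad f\in \K^p_\Theta.$$
This is the only place where the hypothesis $\Theta'\in L^\infty$ enters the argument, and its availability is precisely what distinguishes the model spaces under consideration from the general case.

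First I would establish a local Sobolev inequality: for an interval $I$ of length $\delta$ and $f$ absolutely continuous on $I$, writing $f(x_0)-f(x)=\int_x^{x_0}f'(t)\,\mathrm{d}t$ and applying H\"older's inequality gives $|f(x_0)-f(x)|^p\leq \delta^{p-1}\int_I|f'|^p$. Using $|a+b|^p\leq 2^{p-1}(|a|^p+|b|^p)$ and averaging $x$ over $I$ then yields
$$\sup_{x_0\in I}|f(x_0)|^p \;\leq\; \frac{2^{p-1}}{\delta}\int_I|f|^p\,\mathrm{d}x + 2^{p-1}\delta^{p-1}\int_I|f'|^p\,\mathrm{d}x.$$
I would then partition $\R$ into the half-open intervals $I_k=[k\delta,(k+1)\delta)$, observe that $\mu(I_k)\leq \delta\, D_\mu(\delta)$ by the very definition of $D_\mu(\delta)$, and estimate $\int_{I_k}|f|^p\,\mathrm{d}\mu\leq \mu(I_k)\sup_{I_k}|f|^p$. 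Summing over $k\in\Z$ collapses the local estimates into
$$\int_\R|f|^p\,\mathrm{d}\mu \;\leq\; 2^{p-1}D_\mu(\delta)\bigl(\|f\|_p^p + \delta^p\|f'\|_p^p\bigr).$$

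Plugging in the Bernstein inequality bounds the right-hand side by $2^{p-1}D_\mu(\delta)(1+B_p^p\|\Theta'\|_\infty^p\delta^p)\|f\|_p^p$. The elementary inequality $1+t^p\leq (1+t)^p$ (valid for $t\geq 0$, $p\geq 1$) together with $(1+B_ps)^p\leq \max(1,B_p)^p(1+s)^p$ absorbs the Bernstein constant into an overall constant $C_p=2^{p-1}\max(1,B_p)^p$, yielding \eqref{1}. The main obstacle is really the Bernstein inequality itself: while it is classical in $PW^p_c$ with sharp constant $c$, its analogue in $\K^p_\Theta$ is genuinely delicate, and it is known to fail for general inner functions when $p=\infty$; however, under the hypothesis $\Theta'\in L^\infty(\R)$ the Bernstein bound is available across the range $1\leq p<\infty$ with the expected dependence on $\|\Theta'\|_\infty$ by results of Dyakonov, Baranov and others, after which the remainder of the argument is elementary real analysis. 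A minor technical point is that $f'\in L^p$ already ensures $f$ has an absolutely continuous representative, so the fundamental theorem of calculus used in the Sobolev step applies unconditionally.
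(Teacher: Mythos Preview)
Your argument is correct and follows essentially the same route as the paper's proof in Section~\ref{Sec:5}: partition $\R$ into intervals of length $\delta$, control $\sup_{I_k}|f|$ by a local Sobolev estimate involving $\int_{I_k}|f|^p$ and $\int_{I_k}|f'|^p$, sum, and invoke Dyakonov's Bernstein inequality. The only difference is that the paper applies Minkowski's inequality in $\ell^p$ at the level of $p$-th roots rather than the crude bound $|a+b|^p\le 2^{p-1}(|a|^p+|b|^p)$, which (together with the observation that Dyakonov's inequality holds with $B_p=1$) yields the sharper constant $C_p=1$ in place of your $C_p=2^{p-1}\max(1,B_p)^p$.
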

 
The  proof relying on oversampling gives this result for $p=2$ and a rather bad 
constant $C$ but gives the right behavior with respect to $\delta$
and $\|\Theta'\|_\infty$. We thus give a second proof, based on Bernstein's inequality which allows
to obtain the constant $C_p=1$, falling only short of Donoho and Logan's result in the
case of the Paley--Wiener space. The particular constants are discussed in more detail in  Remark~\ref{remark}. We think that both proofs are interesting due to the fact
that they only rely on real analytic arguments and are rather flexible. 
Finally, we end this article with a version of Theorem~\ref{th:intro2}   that applies to a {different}
class of inner functions (not necessarily meromorphic), the so-called one-component inner functions, but requires a modification in the definition of $D_\mu(\delta)$, {\it see} Theorem~\ref{th:onecomp}. This provides a quantitative converse version of the
Logvinenko--Sereda Theorem for model spaces established in \cite{HJK}.

\medskip

The rest of this paper is organized as follows:
Section~\ref{Sec:2} is devoted to a technical lemma. In Section~\ref{Sec:3}, we introduce
the necessary background on model spaces and prove the oversampling formula. 
In Section~\ref{Sec:4} we give an extension of the large sieve inequality
to model spaces using the oversampling techniques. The approach based on Bernstein type inequalities 
is considered in Section~\ref{Sec:5}, while
in Section~\ref{Sec:6} a certain analog of the large sieve is given for a class of model spaces generated by the so-called
one-component inner functions. 
\bigskip


\section{Preliminary lemma}\label{Sec:2}
Let $\Xi$ be a continuous function such that
\begin{equation}
\label{eq:propxi}
|\Xi(x)|\leq \min(1,|x|^{-1}).
\end{equation}
As an example, one can take $\Xi(x)=\sinc x:=\dfrac{\sin x}{x}$.

\begin{lemma}\label{schur} 
Let $\Xi$ be a function satisfying the bound \eqref{eq:propxi}.
Let $a,b\in \R$, $\alpha,\delta>0$  and $\mu$
be a $\sigma$-finite Borel measure on $\R$.  Then we have
\begin{enumerate}
\item $\displaystyle \int_\R \Xi(x-a)^2\Xi( x-b)^2\,\mathrm{d}x\leq \frac{8\pi}{4+(a-b)^2}$,
\item $\displaystyle \int_\R \Xi(x-a)^2\Xi(x-b)^2\,\mathrm{d}\mu(x)\leq 8\pi \frac{C_\delta^2}{\delta} \frac{\sup_{x\in \R} \mu([x,x+\delta])}{4+(b-a)^2}$,
with $C_\delta=\max(4,1+9\delta^2)$.
\end{enumerate}
\end{lemma}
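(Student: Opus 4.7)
My plan is to dominate both integrals by their analogues for the Poisson-type kernel $H(y):=(1+y^2)^{-1}$. Since $|\Xi(y)|\le\min(1,|y|^{-1})$, an elementary case check ($|y|\le 1$ versus $|y|\ge 1$) gives $\Xi(y)^2\le 2H(y)$, and hence
$$
\Xi(x-a)^2\Xi(x-b)^2\le 4\,H(x-a)H(x-b).
$$
For (1), I would recall that the normalized Poisson kernel $P_y(x):=y/(\pi(x^2+y^2))$ satisfies $P_1\ast P_1=P_2$, so
$$
\int_\R H(x-a)H(x-b)\,\ud x = \pi^2(P_1\ast P_1)(a-b) = \pi^2 P_2(a-b) = \frac{2\pi}{4+(a-b)^2},
$$
and multiplying through by $4$ yields (1).

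For (2), I would partition $\R$ into the disjoint intervals $I_k=[k\delta,(k+1)\delta)$. Since $\mu(I_k)\le M:=\sup_x\mu([x,x+\delta])$,
$$
\int_\R H(x-a)H(x-b)\,\ud\mu(x)\le M\sum_{k\in\Z}\sup_{I_k}\bigl[H(\cdot-a)H(\cdot-b)\bigr],
$$
and the task reduces to replacing each supremum by a local average. This will be done via the oscillation estimate
$$
\frac{1+(x-a)^2}{1+(y-a)^2}\le C_\delta\quad\text{whenever $x,y\in I_k$},
$$
which implies $\sup_{I_k}H(\cdot-a)H(\cdot-b)\le C_\delta^2\,H(y-a)H(y-b)$ for every $y\in I_k$; integrating in $y$ over $I_k$ and dividing by $\delta$ converts the sup into a local integral average, and summing in $k$ telescopes to the integral computed in part (1), producing the claimed prefactor $8\pi C_\delta^2/\delta$.

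The substantive work lies in establishing the oscillation bound with the stated $C_\delta=\max(4,1+9\delta^2)$. Using only the triangle inequality $|x-a|\le|y-a|+\delta$, I would split according to whether $|y-a|\ge 2\delta$ or $|y-a|<2\delta$. In the first case $\delta\le|y-a|/2$ yields $|x-a|\le\tfrac{3}{2}|y-a|$, hence $1+(x-a)^2\le 1+\tfrac{9}{4}(y-a)^2\le 4(1+(y-a)^2)$, accounting for the ``$4$'' branch. In the second case $|x-a|\le 3\delta$, so $1+(x-a)^2\le 1+9\delta^2\le(1+9\delta^2)(1+(y-a)^2)$, accounting for the ``$1+9\delta^2$'' branch. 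The principal obstacle is simply keeping this case split aligned with the claimed constant: once that is settled, the assembly via Poisson-kernel convolution for (1) and the discretization argument for (2) is automatic.
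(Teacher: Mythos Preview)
Your proof is correct and follows essentially the same route as the paper's: both dominate $\Xi^2$ by $2/(1+x^2)$, evaluate the resulting Lorentzian convolution (you via the Poisson semigroup identity $P_1*P_1=P_2$, the paper via residues), then partition $\R$ into intervals of length $\delta$ and use a case split yielding the constant $C_\delta=\max(4,1+9\delta^2)$ to convert suprema into averages. The only cosmetic difference is that the paper phrases the oscillation bound through the distance $d(a,I_\ell)$ and splits according to the position of $a$ relative to the interval, whereas you split on $|y-a|$; the two case analyses are equivalent and produce the same constant.
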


\begin{proof}
The first estimate follows immediately from the simple estimate
$$
|\Xi(x)|^2\leq \min(1,|x|^{-2}) \leq \frac{2}{1+x^2},
$$
and the computation of the integral
$$
\displaystyle \int_\R \frac{\mathrm{d}t}{(1+(t-a)^2)(1+(t-b)^2)}=  \frac{2\pi}{4+(a-b)^2}, 
$$
via the residue formula.

Next, for $\ell\in\Z$, let  $I_\ell=[\ell \delta, (\ell+1)\delta[$, we have 
\begin{align*}
 \int_\R \Xi(x-a)^2\Xi(x-b)^2\,\mathrm{d}\mu(x)
 &=\sum_{\ell\in \Z} \int_{I_\ell} \Xi(x-a)^2\Xi(x-b)^2\,\mathrm{d}\mu(x)
 \\
 &\leq \sup_{\ell\in \Z} \mu(I_\ell) \cdot \sum_{\ell\in \Z} \frac{4}{(1+d(a,I_\ell)^2)(1+d(b,I_\ell)^2)}\\
&=\frac{4}{\delta}\sup_{\ell\in \Z} \mu(I_\ell) \cdot \sum_{\ell\in \Z}
\int_{I_\ell}\frac{\mbox{d}x}{\big(1+d(a,I_\ell)^2\big)\big(1+d(b,I_\ell)^2\big)}.
\end{align*}
Now, let $x\in I_\ell$. If $a\leq (\ell-2)\delta$ then $x-a\geq 2\delta$ so that
$$
d(a,I_\ell)=\ell\delta-a=(\ell+1)\delta-a-\delta\geq x-a-\delta\geq \frac{x-a}{2}
$$
thus
$$
\frac{1}{1+d(a,I_\ell)^2}\leq \frac{4}{1+(x-a)^2}.
$$
The same holds if $a\geq (\ell+3)\delta$. Finally, if $(\ell-2)\delta\leq a\leq (\ell+3)\delta$,
$x-a\leq 3\delta$ thus 
$$
\frac{1}{1+d(a,I_\ell)^2}\leq 1\leq \frac{1+9\delta^2}{1+(x-a)^2}.
$$
It follows that
\begin{align*}
 \int_\R \Xi(x-a)^2\Xi(x-b)^2\,\mathrm{d}\mu(x)
& \leq \frac{4}{\delta} C_\delta^2\sup_{\ell\in \Z} \mu(I_\ell) \cdot  \int_\R \frac{\mbox{d}x}{(1+(x-a)^2)(1+(x-b)^2)}\\
&\leq 8\pi \frac{C_\delta^2}{\delta}\, \frac{\sup_{x\in \R} \mu\big([x,x+\delta]\big)}{4+(b-a)^2},
\end{align*}
with $C_\delta=\max(4,1+9\delta^2)$.
\end{proof}


Applying Lemma \ref{schur}(2) to the dilated measure $\mu_\alpha(A)=\mu(A/\alpha)$, $\alpha>0$
we get
\begin{equation}
\label{eq:schur}
\int_\R \Xi(\alpha x-a)^2\Xi(\alpha x-b)^2\,\mathrm{d}\mu(x)\leq 
8\pi \frac{C_\delta^2}{\delta}\frac{\sup_{x\in \R} \mu([x,x+\delta/\alpha])}{4+(b-a)^2},
\end{equation}
with $C_{\delta}=\max(4,1+9\delta^2)$.

\medskip

A slightly more evolved version of the lemma is easily available: for $m\geq 2$ and $\delta>0$
there is a constant $C_{m,\delta}$ such that
\begin{equation}
\label{eq:highertech}
\int_\R \Xi(x-a)^{2m}\Xi(x-b)^{2m}\,\mathrm{d}\mu(x)\leq  
\frac{C_{m,\delta}}{\bigl(1+(b-a)^2\bigr)^m}\sup_{x\in \R} \frac{\mu([x,x+\delta])}{\delta}.
\end{equation}
This follows from the easily established inequality
$$
\int_\R |\Xi(x-a)|^{2m}|\Xi( x-b)|^{2m}\,\mathrm{d}x\leq \sqrt{\pi}2^{2m+1}\frac{\Gamma(m-1/2)}{\Gamma(m)}\frac{1}{\bigl(1+(b-a)^2\bigr)^{m}}.
$$
%


\section{Oversampling and the Large Sieve in model spaces}\label{Sec:3}

\subsection{Background on  model spaces} The Hardy space on the upper-half plane 
$\C^+:=\{z\in\C:\ \text{Im}\,z>0\}$,  $\H^2=\H^2(\C^+)$ contains all holomorphic functions on $\C^+$ for which
$$
\sup_{y>0}\int_\R |f(x+iy)|^2\,\mbox{d}x<\infty.
$$
Every function  $f\in\H^2$ has an almost everywhere defined ``vertical'' boundary function $f(x):=\lim_{y\rightarrow 0}f(x+iy)$, and $f\in L^2(\R)$ which may be used to define an inner product on $\H^2$ 
$$
\langle f,g\rangle :=\int_\R f(x)\overline{g(x)}\,\mbox{d}x.
$$
We say that an analytic function $\Theta$ on $\C^+$ is \emph{inner} if $|\Theta|\leq1$ on $\C^+$ and if the almost everywhere defined boundary function $\Theta(x)$, $x\in\R$, has modulus one. 
If $\Theta$ is an inner function, then the corresponding \emph{model space} is defined by
\begin{equation}\label{def:model-space}
\K^2_\Theta:=\H^2\ominus\Theta\H^2=(\Theta\H^2)^\bot.
\end{equation}
Recall that the reproducing kernel for  functions $\K^2_\Theta$ is defined by 
\begin{equation}\label{eq:RK}
k^\Theta_z(w)=\frac{i}{2\pi}\frac{1-\overline{\Theta(z)}\Theta(w)}{w-\overline{z}},\qquad  z,w\in\C^+.
\end{equation}
For every $f\in\K^2_\Theta$, 
$$
f(z)=\langle f,k^\Theta_z\rangle,
$$
and 
$$
k^\Theta_z(z) =\|k^\Theta_z\|^2=\frac{1-|\Theta(z)|^2}{4\pi\text{Im}\,z}.
$$ 
Each inner function can be factored as 
$$
\Theta(z)=e^{i\tau}\Theta_c(z)B_\lambda (z)S_\mu(z),\qquad z\in \C^+,
$$
where $\tau$ is a real constant, $\Theta_c(z)={e^{ icz}}$, $c\geq0$,
$$
B_\Lambda(z)= \prod_{\lambda\in \Lambda}  e^{i\alpha_\lambda} \frac{z-\lambda}{z-\overline{\lambda}}, 
$$
is the {\em Blaschke product} with zeros $\lambda\in \Lambda\subset\C^+$, repeated according to multiplicity,  
satisfying the Blaschke condition
\begin{equation}
\label{cond:blaschke}
\sum_{\lambda\in \Lambda}\frac{\textrm{Im}\, \lambda}{1+|\lambda|^2}<\infty,
\end{equation}
and $\alpha_\lambda\in \R$, 
$$
S_\mu(z)=\exp \left(i\int_\R \left(\frac{1}{x-z}-\frac{t}{x^2+1}\right)\mbox{d}\mu(x)\right),
$$
where $\mu$ is a singular measure with respect to the Lebesgue measure  such that 
$$
\int_\R \frac{\mbox{d}\mu(x)}{1+x^2}<\infty.
$$
The spectrum of $\Theta$ is the closed set 
\begin{equation}
\label{spec}
\rho(\Theta):=\big\{\zeta\in \overline{\C^+} \cup\infty\text{ : }
\liminf_{\stackrel{z\to\zeta}{z\in \C^+}}|\Theta(z)|=0\big\}.
\end{equation}
Note that $\Theta$, along with every function in $\K^2_\Theta$, 
has an analytic extension across any interval of  $\R\setminus \rho(\Theta)$. 

By the Ahern--Clark theorem \cite{AC}, $k^\Theta_x\in\K^2_\Theta$, $x\in\R$, 
if and only if the modulus of the angular derivative of $\Theta$ is finite. This means that
$$
|\Theta'(x)|=a+ 2\sum_{\lambda\in \Lambda}\frac{\textrm{Im}\, \lambda}{|x-\lambda|^2}+\int_\R \frac{\mbox{d}\mu(t)}{(t-x)^2}<\infty.
$$
For any $\alpha \in \C$ , $|\alpha|=1$ the function $(\alpha+\Theta)/(\alpha-\Theta)$ has a positive real part in the upper half plane, and  hence, by the Herglotz--Riesz representation theorem, there exist $c_\alpha>0$ and a 
non-negative measure $\sigma_{\Theta}^{\alpha}$, called  the Clark measure, such that
$$
\textrm{Re}\,  \frac{ \alpha+\Theta(z)}{\alpha-\Theta(z)}= 
c_\alpha \textrm{Im}\, z + \frac{\textrm{Im}\, z}{\pi} \int_{\R} \frac{\mbox{d}\sigma_{\Theta}^{\alpha}(x)}{|x-z|^2},\qquad  z\in \C^+.
$$
The Clark measure $\sigma_{\Theta}^{\alpha}$ is carried by the set $\{x\in \R\text{ : } \lim_{ y\to 0+}\Theta(x+iy)=\alpha\}$. 

Clark \cite{C} showed that if $c_\alpha = 0$ and $\sigma_{\Theta}^{\alpha}$  is purely atomic, that is 
$$
\sigma_{\Theta}^{\alpha}=\sum a_n\delta_{x_n}, 
$$
where $\delta_x$  denotes the Dirac measure at the point $x$, then the system $\{k_{x_n}^{\Theta} \}$ is an orthogonal basis in $K_{\Theta}^{2}$.
In particular, one has
\begin{equation}\label{eq:clarkshan}
f(z)=\sum_{n\in\Z}f(x_n)\frac{k^\Theta_{x_n}(z)}{\|k^\Theta_{x_n}\|^2},\qquad f\in \K^2_\Theta,
\end{equation}
and 
\begin{equation}\label{eq:plancherel}
\|f\|^2=\sum_{n\in\Z}\frac{|f(x_n)|^2}{\|k^\Theta_{x_n}\|^2}.
\end{equation}
We end this section with a discussion of a special class of inner functions
which will be considered throughout the paper (with exception of Section \ref{Sec:6}).
A {\it meromorphic inner function} on  $\C^+$ 
is an inner function on $\C^+$  with a meromorphic continuation to $\C$. Any 
meromorphic inner function $\Theta$ can be represented as
$$
\Theta=  \Theta_cB_\Lambda,
$$
where $c\geq 0$ and $B_\Lambda$ is the Blaschke product associated
with the zeroes $\Lambda=\{\lambda_n\}$ of $\Theta$ which satisfy
$|\lambda_n|\to \infty$ as well as the Blaschke condition \eqref{cond:blaschke}.
All elements of the corresponding model space $\K^2_\Theta$ are also meromorphic in $\C$, 
and there is a canonical isomorphism of such model spaces with de Branges' Hilbert spaces
of entire functions  \cite{de}.

By  the Riesz--Smirnov factorization there exists an increasing, real analytic function $\varphi:\R\rightarrow\R$ such that
$$
\Theta(x)=\text{exp}(i\varphi(x)).
$$
In that case $|\Theta'|=\varphi'$ and 
\begin{equation}\label{eq:norm-Kt}
\|k^\Theta_x\|^2=\frac{\varphi'(x)}{2\pi}.
\end{equation}
Then, by the Cauchy--Schwarz inequality one has
\begin{equation}\label{eq:pointwise-Kx}
|k^\Theta_x(t)|=|\langle k^\Theta_x,k^\Theta_t\rangle|\leq \frac{\sqrt{\varphi'(x)\varphi'(t)}}{2\pi},\qquad x,t\in\R.
\end{equation}

In the case of meromorphic inner functions the Clark measure construction becomes much more transparent. 
In this setting such measures were introduced by de Branges (see, {\it e.g.}, \cite{de}) long before the work of Clark. 
For model spaces associated with meromorphic inner functions orthogonal bases of reproducing kernels 
can be constructed as follows. 
For $\gamma\in [0,2\pi)$ define the set of points $\{x_n\}_{n\in \Z}$ by
\begin{equation}\label{eq:def-tn}
\varphi(x_n)=\gamma+2\pi n,\qquad n\in\Z.
\end{equation}
(Note that the points $x_n$ may not exist for all $n\in\Z$.) Then the family of normalized reproducing kernels 
$\{k^\Theta_{x_n}/ \|k^\Theta_{x_n}\| \}_{n\in\Z}$, with the points $\{x_n\}_{n\in\Z}\in \R$  given by \eqref{eq:def-tn}, 
forms an orthonormal  basis for $\K^2_\Theta$ for each $\gamma\in[0,2\pi)$, except, maybe, one 
(in the case that $\Theta-e^{i\gamma}\in L^2(\R)$).  

In what follows we will consider the class of inner functions such that 
$\Theta' \in H^\infty(\C^+)$. This condition implies that $\Theta$ is meromorphic and is equivalent to
$\Theta' =\phi' \in L^\infty(\R)$. It was noticed already in \cite{dyak} that the model spaces $\K_\Theta^2$ 
with  $\Theta' \in L^\infty(\R)$ have many properties analogous to the properties of the Paley--Wiener 
spaces $PW_c^2(\R)$.
\bigskip


\subsection{Enlarging model spaces and oversampling}

We need  the following classical result \cite{Ni}. However, for the sake of completeness, we give the complete proof here.

\begin{lemma}\label{cor:enlarge-theta}
Let $\Theta$ be an inner function, let  $\Theta_1=e^{i\tau_1}\Theta_{c_1}B_{\Lambda_1}$, and $\Theta_2=e^{i\tau_2}\Theta_{c_2}B_{\Lambda_2}$  where   $\Lambda_1,\Lambda_2\subset\C^+$, $i=1,2$, are two Blaschke sequences. Then
\begin{enumerate} 
\item  $\K^2_{\Theta_1}\subseteq \K^2_{\Theta_2}$ if and only if $c_1\leq c_2$ and $\Lambda_1\subseteq \Lambda_2$.
\item  $\K^2_\Theta\subseteq \K^2_{\Theta_cB_\Lambda\Theta}$,  where  $c\geq 0$ and $\Lambda\subset\C^+$ a Blaschke sequence.
 \item If $0\leq c_1\leq c_2$, $\Lambda_1\subseteq \Lambda_2$, and $f\in\K^2_\Theta$, then 
$\Theta_{c_1}B_{\Lambda_1} f\in \K^2_{\Theta_{c_2}B_{\Lambda_2}\Theta}.$
\end{enumerate}
\end{lemma}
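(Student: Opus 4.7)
My approach rests on a single fundamental identity: if $u$ is inner, then $uH^2\subseteq H^2$ is a closed subspace, so whenever $\Theta_2=\Theta_1 u$ with $u$ inner we get $\Theta_2 H^2=\Theta_1(uH^2)\subseteq \Theta_1 H^2$, and taking orthogonal complements in $H^2$ yields $\K^2_{\Theta_1}\subseteq \K^2_{\Theta_2}$. Conversely, I would observe that $\K^2_{\Theta_1}\subseteq \K^2_{\Theta_2}$ forces $\Theta_2 H^2\subseteq \Theta_1 H^2$ by the same orthogonality argument, whence $\Theta_2=\Theta_1 h$ for some $h\in H^2$, and the fact that $|h|=1$ a.e.\ on $\R$ makes $h$ itself inner. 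This sets up the entire proof of (1).

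For the sufficiency in (1), I would simply write $\Theta_2=\Theta_1\cdot e^{i(\tau_2-\tau_1)}\Theta_{c_2-c_1}B_{\Lambda_2\setminus\Lambda_1}$, noting that the second factor is inner since $c_2-c_1\geq 0$ and a subsequence of a Blaschke sequence remains a Blaschke sequence. For necessity, I would take the inner quotient $h=\Theta_2/\Theta_1$ from the argument above and appeal to uniqueness of its canonical factorization $h=e^{i\tau}\Theta_c B_\Lambda S_\nu$: since $\Theta_2=\Theta_1 h$ has no singular factor by hypothesis, $S_\nu\equiv 1$. Matching exponential and Blaschke parts then gives $c_2=c_1+c\geq c_1$ and $\Lambda_2=\Lambda_1\cup\Lambda\supseteq\Lambda_1$.

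Part (2) falls out immediately from the fundamental identity by factoring $\Theta_c B_\Lambda\Theta=\Theta\cdot(\Theta_c B_\Lambda)$ with $\Theta_c B_\Lambda$ inner. For (3), my plan is to set $g:=\Theta_{c_1}B_{\Lambda_1}f$ and verify the two defining conditions of $\K^2_{\Theta_{c_2}B_{\Lambda_2}\Theta}$ directly. Membership in $H^2$ is clear since $|\Theta_{c_1}B_{\Lambda_1}|\leq 1$ on $\C^+$ and $f\in H^2$. For orthogonality, I would decompose $\Theta_{c_2}B_{\Lambda_2}\Theta=\Theta_{c_1}B_{\Lambda_1}\cdot v\cdot\Theta$ with $v=\Theta_{c_2-c_1}B_{\Lambda_2\setminus\Lambda_1}$ inner, and compute for any $h\in H^2$,
\[
\langle g,\Theta_{c_2}B_{\Lambda_2}\Theta h\rangle=\int_\R |\Theta_{c_1}B_{\Lambda_1}|^2\,f\,\overline{\Theta\, v h}\,\mathrm{d}x=\int_\R f\,\overline{\Theta(vh)}\,\mathrm{d}x=0,
\]
using $|\Theta_{c_1}B_{\Lambda_1}|=1$ on $\R$, $vh\in H^2$, and $f\perp\Theta H^2$.

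The one place that demands care is the necessity in (1), which hinges on uniqueness of the canonical factorization of an inner function and the crucial observation that no singular inner factor can sneak into the quotient $\Theta_2/\Theta_1$, since neither $\Theta_1$ nor $\Theta_2$ carries one. Everything else reduces to direct applications of the fundamental identity, and I do not expect any hidden obstructions.
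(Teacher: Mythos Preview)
Your argument is correct and follows essentially the same route as the paper's: both hinge on the criterion that $\K^2_{\Theta_1}\subseteq \K^2_{\Theta_2}$ if and only if $\Theta_2/\Theta_1$ is inner, and your computation for (3) is the same integral identity the paper uses (the paper first shows $\Theta_{c_1}B_{\Lambda_1}f\in\K^2_{\Theta_{c_1}B_{\Lambda_1}\Theta}$ and then invokes (1), whereas you absorb the extra factor $v$ directly, but this is cosmetic). One small slip to fix: from $\Theta_2 H^2\subseteq\Theta_1 H^2$ you cannot write $\Theta_2=\Theta_1 h$ with $h\in H^2$, since $\Theta_2\notin H^2$; the correct statement is that $\overline{\Theta_1}\Theta_2$ multiplies $H^2$ into $H^2$, hence lies in $H^\infty$, and being unimodular a.e.\ on $\R$ is therefore inner. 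For the necessity in (1) the paper argues by exhibiting a pole of $\Theta_2/\Theta_1$ when $\Lambda_1\not\subseteq\Lambda_2$ and checking boundedness of $e^{i(c_2-c_1)z}$ separately, while you invoke uniqueness of the canonical factorization; both are standard and equally valid.
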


\begin{proof}
Let $\Theta_1,\Theta_2$ be inner functions. It is known that  $\K^2_{\Theta_1}\subseteq \K^2_{\Theta_2}$ if and only if $ {\Theta_2}/{\Theta_1}$ is an inner function.
Hence,  $\Theta_2/\Theta_1$ is inner if and only if 
$$
\frac{\Theta_2(z)}{\Theta_1(z)}=e^{i(\tau_2-\tau_1)}e^{i(c_2-c_1)z}\frac{B_{\Lambda_2}(z)}{B_{\Lambda_1}(z)}=\Theta(z),
$$
for some $\Theta$ inner. If $c_2\geq c_1$ and $\Lambda_1\subset\Lambda_2$, then $B_{\Lambda_2}/B_{\Lambda_1}=B_{\Lambda_2\backslash\Lambda_1}$ is a Blaschke product and $\Theta$ is therefore inner.

Now assume that $\Theta_2/\Theta_1$ is inner and that there exists $\lambda^\ast\in\Lambda_1$ that is not contained in $\Lambda_2$ (in the sense that for $\Lambda_i=\{\lambda_n^i\}_{n\in\N}$, $i\in\{1,2\}$, there is no injection $\phi:\N\rightarrow\N$ that satisfies $\lambda_n^1=\lambda_{\phi(n)}^2$ for every $n\in\N$), then 
$
\#\{n\in\N:\ \lambda^\ast=\lambda_n^2\}<\#\{n\in\N:\ \lambda^\ast=\lambda_n^1\},
$
which creates a pole at $\lambda^\ast$ for $\Theta_2/\Theta_1$, as the singular inner part is always nonzero, a contradiction to $\Theta_2/\Theta_1$ being inner. Then, as $e^{i(c_2-c_1)z}$ is bounded on $\C^+$ if and only if $c_2\geq c_1$, the first assertion follows. It only remains to show the last statement. By definition $f\in\K^2_\Theta$ if and only if $f\bot \Theta g$ for every $g\in\H^2$. Hence, as $\Theta_{c_1} B_{\Lambda_1}$ is an inner function we have
\begin{align*}
\langle \Theta_{c_1} B_{\Lambda_1} f,S_{c_1} B_{\Lambda_1}\Theta g\rangle&
=\int_\R \Theta_{c_1}(x)B_{\Lambda_1}(x) f(x)\overline{\Theta_{c_1}(x)B_{\Lambda_1}(x) \Theta(x)g(x)} \, \mbox{d}x\\
&=\int_\R  f(x)\overline{ \Theta(x)g(x)}\,\mbox{d}x=0,
\end{align*}
which shows $\Theta_{c_1}B_{\Lambda_1} f\in \K^2_{\Theta_{c_1}B_{\Lambda_1}\Theta}$. The result then follows from the first part of the corollary as $\Theta_{c_2}B_{\Lambda_2}\Theta=(\Theta_{c_2-c_1}B_{\Lambda_2\backslash\Lambda_1})(\Theta_{c_1}B_{\Lambda_1}\Theta)$.
\end{proof}

We are now ready to prove an oversampling theorem for model spaces.

\begin{theorem}\label{thm:over-sampling} Let $c>0$, and $\Theta$ be an inner function such that some  Clark measure for $\Theta_c \Theta$  is purely atomic and let $\{k^{\Theta_c\Theta}_{x_n}\}_{n\in \Z}$ be the corresponding basis  of $\K_{\Theta_c\Theta}^2$.
Then for every integer $m\geq 1$ and for every  $f\in\K^2_\Theta$, the following sampling formula holds: 
\begin{equation}
f(x)=\sum_{n\in\Z}f(x_n)e^{-\frac{ic(x-x_n)}{2}}\sinc\left(\frac{c(x-x_n)}{2  m}\right)^m \frac{ {k}^{\Theta_c\Theta}_{x_n}(x)}{\| {k}^{\Theta_c\Theta}_{x_n}\|^2},\qquad  x\in\R.
\end{equation}
{In particular, one can reconstruct  $f\in\K^2_\Theta$ from its samples  using an expansion with respect to functions of arbitrary polynomial decay.}

Further,
\begin{equation}
\label{eq:plancherelover}
\|f\|^2=\sum_{n\in\Z}\dfrac{|f(x_n)|^2}{\| {k}^{\Theta_c\Theta}_{x_n}\|^2},\qquad f\in \K_\Theta^2.
\end{equation}
\end{theorem}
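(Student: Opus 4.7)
The strategy is to reduce the claim to the Clark sampling formula \eqref{eq:clarkshan}, applied in the larger space $\K^2_{\Theta_c\Theta}$, after multiplying $f$ by a carefully chosen auxiliary function that supplies the extra $\sinc^m$ factor. By Lemma~\ref{cor:enlarge-theta}(2) (applied with an empty Blaschke part), $\K^2_\Theta\subseteq \K^2_{\Theta_c\Theta}$, so the hypothesized orthonormal basis $\{k^{\Theta_c\Theta}_{x_n}/\|k^{\Theta_c\Theta}_{x_n}\|\}_{n\in\Z}$ of $\K^2_{\Theta_c\Theta}$ also expands $f$. Together with the reproducing identity $\langle f,k^{\Theta_c\Theta}_{x_n}\rangle=f(x_n)$, Parseval's identity immediately yields the Plancherel formula \eqref{eq:plancherelover}; it therefore remains only to prove the sampling identity.

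For the sampling identity, fix $x\in\R$ and set
\[
g_x(y):=e^{-ic(x-y)/2}\sinc\bigl(c(x-y)/(2m)\bigr)^m,\qquad y\in\R.
\]
Observe that $g_x(x)=1$, while $g_x(x_n)$ is precisely the coefficient appearing in the statement. Writing $g_x(y)=\psi(y-x)$ with $\psi(u):=e^{icu/2}\sinc(cu/(2m))^m$, the factor $\sinc(cu/(2m))^m$ has Fourier transform equal, up to a normalization constant, to the $m$-fold convolution of $\mathbf{1}_{[-c/(2m),c/(2m)]}$, hence supported in $[-c/2,c/2]$; the modulation by $e^{icu/2}$ then shifts this support to $[0,c]$. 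Since also $|\psi|\leq 1$, we conclude $\psi\in \H^\infty\cap \K^2_{\Theta_c}\subseteq \K^\infty_{\Theta_c}$, and translation invariance of the Fourier-support condition gives $g_x\in\K^\infty_{\Theta_c}$ as well.

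The crucial step is to show that $f\cdot g_x\in \K^2_{\Theta_c\Theta}$. I would invoke the standard boundary characterization that $h\in\K^p_I$ if and only if $h\in\H^p$ and $I\bar h\in\H^p$ on $\R$. Since $f\in\K^2_\Theta$ gives $f,\,\Theta\bar f\in\H^2$ and $g_x\in\K^\infty_{\Theta_c}$ gives $g_x,\,\Theta_c\,\overline{g_x}\in\H^\infty$, multiplying produces $fg_x\in\H^2$ together with
\[
(\Theta_c\Theta)\,\overline{fg_x}=(\Theta\bar f)\,(\Theta_c\,\overline{g_x})\in \H^2\cdot\H^\infty\subseteq\H^2,
\]
so that $fg_x\in\K^2_{\Theta_c\Theta}$. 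Applying \eqref{eq:clarkshan} to $fg_x$ and evaluating at $y=x$ now gives
\[
f(x)=f(x)\,g_x(x)=\sum_{n\in\Z}f(x_n)\,g_x(x_n)\,\frac{k^{\Theta_c\Theta}_{x_n}(x)}{\|k^{\Theta_c\Theta}_{x_n}\|^2},
\]
which is the announced identity once the explicit form of $g_x(x_n)$ is substituted. The main obstacle is exactly this product step: cleanly placing $fg_x$ in the right model space; everything else is either a direct Fourier-support computation for the auxiliary factor, an appeal to the translation invariance of $\K^\infty_{\Theta_c}$, or a straightforward application of the Clark machinery recalled in Section~\ref{Sec:3}.
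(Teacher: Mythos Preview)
Your argument is correct, but it follows a genuinely different route from the paper's. The paper does not construct the auxiliary multiplier $g_x$ in one stroke; instead it observes that for any choice of parameters $0\le c_1,\dots,c_m\le c/m$ the function $e^{i(\sum_k c_k)z}f$ lies in $\K^2_{\Theta_c\Theta}$ (by Lemma~\ref{cor:enlarge-theta}), applies the Clark expansion \eqref{eq:clarkshan} to each such function, and then \emph{averages} over the cube $[0,c/m]^m$ in the parameters $c_1,\dots,c_m$. The $\sinc^m$ factor emerges from the elementary integral $\int_0^{c/m}e^{-ic_k(z-x_n)}\,\mathrm{d}c_k$, and the exchange of sum and integral is justified by the $L^2$ convergence of the Clark series. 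Your approach instead packages the whole multiplier at once: you identify $g_x$ as an element of $\K^\infty_{\Theta_c}$ via its Fourier support, and then use the product rule $\K^2_\Theta\cdot\K^\infty_{\Theta_c}\subseteq\K^2_{\Theta_c\Theta}$ (through the boundary characterization $h\in\K^p_I\iff h\in\H^p$ and $I\bar h\in\H^p$) to place $fg_x$ directly in the enlarged model space. This is more conceptual and avoids the averaging computation, at the cost of invoking the $\K^\infty$ machinery and the multiplication lemma; the paper's route is more elementary in that it only needs Lemma~\ref{cor:enlarge-theta} and a one-line integral, but is slightly more computational. Both arguments derive \eqref{eq:plancherelover} the same way, directly from \eqref{eq:plancherel} and the inclusion $\K^2_\Theta\subseteq\K^2_{\Theta_c\Theta}$.
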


\begin{proof}
If  $0\leq c_k \leq c/m$, $k=1,\ldots,m$, then $0\leq\sum_k c_k \leq c$, and 
$\Theta_{\sum_k c_k }f\in \K^2_{\Theta_c\Theta}$ by Lemma~\ref{cor:enlarge-theta}. Hence, using the sampling formula
\eqref{eq:clarkshan} for $\K^2_{\Theta_c\Theta}$, one has
$$
e^{i\sum_k c_k z}f(z)=\sum_{n\in\Z}f(x_n)e^{i\sum_k c_k  x_n}
\frac{ {k}^{\Theta_c\Theta}_{x_n}(z)}{\| {k}^{\Theta_c\Theta}_{x_n}\|^2}, \qquad z\in\C^+,
$$
and \eqref{eq:plancherelover} follows directly from \eqref{eq:plancherel}.
Further
\begin{align*}
f(z)&=\left(\frac{m}{c}\right)^m\int_0^{c/m}\ldots\int_0^{c/m} f(z)\,\mbox{d}c_1\cdots\mbox{d}c_m
\\
&=\left(\frac{m}{c}\right)^m\int_0^{c/m}\ldots\int_0^{c/m}\sum_{n\in\Z}f(x_n)e^{-i\sum_k c_k (z-x_n)}\frac{ {k}^{\Theta_c\Theta}_{x_n}(z)}{\| {k}^{\Theta_c\Theta}_{x_n}\|^2}\,\mbox{d}c_1\cdots\mbox{d}c_m
\\
&=\sum_{n\in\Z}f(x_n)\left(\frac{m}{ic(z-x_n)}\right)^m\left(1-e^{-\frac{ic(z-x_n)}{m}}\right)^m\frac{ {k}^{\Theta_c\Theta}_{x_n}(z)}{\| {k}^{\Theta_c\Theta}_{x_n}\|^2}.
\end{align*}
Moreover, we find that for almost every $x\in\R$,
\begin{align*}
f(x)&
=\sum_{n\in\Z}f(x_n)e^{-\frac{ic(x-x_n)}{2}}\left(\frac{m}{ic(x-x_n)}\left(e^{\frac{ic(x-x_n)}{2m}}-e^{-\frac{ic(x-x_n)}{2m}}\right)\right)^m \frac{ {k}^{\Theta_c\Theta}_{x_n}(x)}{\| {k}^{\Theta_c\Theta}_{x_n}\|^2}
\\
&=\sum_{n\in\Z}f(x_n)e^{-\frac{ic(x-x_n)}{2}}\sinc\left(\frac{c(x-x_n)}{2 m}\right)^m \frac{ {k}^{\Theta_c\Theta}_{x_n}(x)}{\| {k}^{\Theta_c\Theta}_{x_n}\|^2}.
\end{align*}
The oversampling formula is thus established.
\end{proof}
\bigskip


\section{Proof of Theorem \ref{th:intro2} based on oversampling}
\label{Sec:4}

We now have everything in place to prove Theorem \ref{th:intro2}.

Let $\Theta' \in L^\infty(\R)$. Then \eqref{eq:norm-Kt} and \eqref{eq:pointwise-Kx} imply
\begin{equation}\label{eq:golbal-Kx}
\frac{|k^\Theta_x(t)|}{\|k^\Theta_x\|}\leq \sqrt{\frac{\|\Theta'\|_{\infty}}{2\pi}},\qquad x,t\in\R.
\end{equation}
Let $ {k}^{\Theta_c\Theta}_z\in\K_{\Theta_c\Theta}^2$ be the reproducing kernel functions in $\K_{\Theta_c\Theta}^2$.
Denote by  $\{x_n\}_{n\in\Z}\subset\R$ the sequence associated in \eqref{eq:def-tn} with $\Theta_c\Theta$, that is,
$$
cx_n+\varphi(x_n)=\gamma+2\pi n, \qquad \gamma\in[0,2\pi), \ n\in\Z.
$$

For $c>0$ to be chosen later, set $\alpha=\dst \frac{c}{4}$ and $\Xi(x)=\sinc(x)$. 
First, from Theorem~\ref{thm:over-sampling}  (applied to  $m=2$) Fubini's theorem, and the bound \eqref{eq:golbal-Kx} we get  
\begin{align*}
\int_\R &|f(x)|^2\,\mbox{d}\mu(x)
=\int_\R\left|\sum_{n\in\Z}f(x_n)e^{-\frac{ic(x-x_n)}{2}}\sinc\left(\frac{c(x-x_n)}{4 }\right)^2
\frac{ {k}^{\Theta_c\Theta}_{x_n}(x)}{\| {k}^{\Theta_c\Theta}_{x_n}\|^2}\right|^2
\,\mbox{d}\mu(x)\\
&\leq \int_\R\sum_{n,k\in\Z}
\frac{|f(x_n)f(x_k)|}{\| {k}^{\Theta_c\Theta}_{x_n}\|\| {k}^{\Theta_c\Theta}_{x_k}\|}
\left|\Xi\bigl(\alpha(x-x_n)\bigr)\Xi\bigl(\alpha(x-x_k)\bigr)\right|^2
\frac{| {k}^{\Theta_c\Theta}_{x_n}(x) {k}^{\Theta_c\Theta}_{x_k}(x)|}{\| {k}^{\Theta_\Theta}_{x_n}\|\| {k}^{\Theta_c\Theta}_{x_k}\|}
\,\mbox{d}\mu(x)\\
&\leq\frac{c+ \|\Theta'\|_\infty}{2\pi}
\sum_{n,k\in\Z}\frac{|f(x_n)f(x_k)|}{\| {k}^{\Theta_c\Theta}_{x_n}\|\| {k}^{\Theta_c\Theta}_{x_k}\|} 
\int_\R \Xi\bigl(\alpha(x-x_n)\bigr)^2\Xi\bigl(\alpha(x-x_k)\bigr)^2\,\mbox{d}\mu(x).
\end{align*}
Applying the estimate \eqref{eq:schur} with $\eta\alpha$ (instead of $\delta$), $\eta$ to be chosen, we get
\begin{equation}
\label{eq:schur2}
\int_\R |f(x)|^2d\mu(x)
\leq4\frac{C_{\eta\alpha}^2}{\eta\alpha}(c+ \|\Theta'\|_\infty) \sum_{n,k\in\Z}\frac{|f(x_n)f(x_k)|}{\| {k}^{\Theta_c\Theta}_{x_n}\|\| {k}^{\Theta_c\Theta}_{x_k}\|} \frac{\sup_{x\in \R}\mu([x,x+\eta])}{4+\alpha^2(x_n-x_k)^2}
\end{equation}
with $C_{\eta\alpha}=\max(4,1+9\eta^2\alpha^2)$. We will choose $\eta\leq\delta$ with $\eta\alpha\leq 1/\sqrt{3}$,
{\it i.e.}, $\eta \leq\dfrac{4}{\sqrt{3}c}$, so that
$C_{\eta\alpha}=4$.
Now, by \eqref{eq:golbal-Kx},
$$
\begin{aligned}
2\pi |n-k| & =|cx_{n}+\varphi(x_{n})-cx_k-\varphi(x_k)| \\
& \leq (c+\|\varphi'\|_\infty) |x_{n}-x_k|
=(c+\|\Theta'\|_\infty) |x_{n}-x_k|.
\end{aligned}
$$
Set $u=(u_k)_k$ with
$u_k=\dfrac{|f(x_k)|}{\| {k}^{\Theta_c\Theta}_{x_k}\|}$ so that
$\norm{u}_{\ell^2(\Z)}^2=\norm{f}^2$ with \eqref{eq:plancherelover}.
Moreover, we set 
$$
\lambda=\dfrac{\pi^2\alpha^2}{(c+\|\Theta'\|_\infty)^2}
=\left(\dfrac{\pi c}{4(c+\|\Theta'\|_\infty)}\right)^2,
$$
$v=(v_k)_k$ with $v_k=\frac{1}{1+\lambda k^2}$ and notice that
$$
\norm{v}_{\ell^1(\Z)}= 1+2\sum_{k\geq 1}\frac{1}{1+\lambda k^2}
\leq 1+2\int_0^{\infty}\frac{\mbox{d}x}{1+\lambda x^2}
= 1+\frac{\pi}{\sqrt{\lambda}}
=1+\dfrac{4(c+\|\Theta'\|_\infty)}{c}.
$$
Then
\begin{align*}
\sum_{n,k\in\Z}\frac{|f(x_n)f(x_k)|}{\| {k}^{\Theta_c\Theta}_{x_n}\|\| {k}^{\Theta_c\Theta}_{x_k}\|} \frac{1}{4+\alpha^2(x_n-x_k)^2}
&\leq \frac{1}{4}\sum_{n\in\Z}u_n\sum_{k\in\Z}\frac{u_k}{1+\lambda(n-k)^2}\\
&=\frac{1}{4}\scal{u,u*v}\leq\frac{1}{4}\norm{u}_{\ell^2(\Z)}^2\norm{v}_{\ell^1(\Z)}.
\end{align*}
All in one, we get
\begin{align*}
\int_\R &|f(x)|^2\mbox{d}\mu(x)
\leq \frac{16}{\eta\alpha}M(\eta)\Big(c+\|\Theta'\|_\infty\Big)
\left(1+4 \frac{c+ \|\Theta'\|_\infty}{c}\right)
\|f\|^2,
\end{align*}
where $M(\eta):= \sup_{x\in \R}\mu([x,x+\eta])$. It remains to chose $c,\eta$. To do so, we distinguish two cases.

\noindent{\bf Case 1.} If $\|\Theta'\|_\infty\geq \dfrac{4}{\sqrt{3}\delta}$, we take $c=\|\Theta'\|_\infty$
and $\eta=\dfrac{4}{\sqrt{3}c}\leq \delta$ thus $\eta\alpha=\dfrac{1}{\sqrt{3}}$. Then
$$
\int_\R |f(x)|^2\mbox{d}\mu(x)
\leq 288\sqrt{3}\, M(\eta) \|\Theta'\|_\infty\|f\|^2
\leq 500\,   \Big(\frac{1}{\delta}+\|\Theta'\|_\infty\Big)M(\delta)\,\|f\|^2.
$$

\noindent{\bf Case 2.} If $\|\Theta'\|_\infty\leq \dfrac{4}{\sqrt{3}\delta}$,
we take $c=\dfrac{4}{\sqrt{3}\delta}$ and $\eta=\dfrac{4}{\sqrt{3}c}= \delta$
so that 
$$
\int_\R |f(x)|^2\mbox{d}\mu(x)
\leq 576\, \Big(\frac{1}{\delta}+\|\Theta'\|_\infty\Big)
M(\delta)\,\|f\|^2,
$$
as desired.\hfill $\Box$

\begin{remark}
In the proof, we have used the oversampling formula with $m=2$.
One may of course increase $m$ and this would lead to slightly better numerical
constants, at the price of much higher technicality. This requires in particular to replace
Lemma \ref{schur} with \eqref{eq:highertech}.
As the ultimate constants would anyway be worse than those we find in the next section,
we did not follow this path further.
\end{remark}
\bigskip


\section{An $L^p$-version of Theorem \ref{th:intro2} and Bernstein-type inequalities}
\label{Sec:5}

The previous proof gives a rather large constant $C$ in the inequality \eqref{1}. We present another proof 
that gives a better estimate;
in particular, we have the constant 1 in front of $\frac{1}{\delta}$, as in the classical Donoho--Logan theorem. Also, this proof
applies to $L^p$ analogs of the model spaces, the subspaces $\K_\Theta^p$ of
the Hardy space $\H^p$, where $1\le p<\infty$. 

Recall that for $1\le p\le\infty$ the subspace $\K_\Theta^p$ is defined as 
$$
\K_\Theta^p = \H^p \cap \Theta \overline{\H^p},
$$
where $\H^p$ is understood as a closed subspace of $L^p(\R)$.
This definition agrees with the one given for $\K^2_\Theta$ earlier. The properties of the spaces
$\K_\Theta^p$ are very much similar to the properties of $\K^2_\Theta$. In particular, if $\Theta$ 
is a meromorphic inner function, then all elements of $\K_\Theta^p$ are meromorphic functions in $\mathbb{C}$,
while for $\Theta(z) = e^{2icz}$ we have $\K_\Theta^p = e^{icz} PW_c^p(\mathbb{R})$,
where $PW_c^p(\mathbb{R})$ is the space of all entire functions of exponential type at most $c$
whose restriction to $\mathbb{R}$ belongs to $L^p(\mathbb{R})$.

\begin{theorem} 
\label{th:lp}
Let $1\leq p<\infty$, $\Theta' \in L^\infty(\R)$, $\mu$ be a Borel measure on $\R$, and let $\delta>0$
and $D_\mu(\delta)$ defined in \eqref{eq:dmudelta}.
Then for any $f\in \K_\Theta^p$ we have
\begin{equation}
\label{gh}
\|f\|_{L^p(\mu)}^p \le  \big(1+ \delta \|\Theta'\|_\infty\big)^pD_\mu(\delta)\|f\|_p^p.
\end{equation}  
\end{theorem}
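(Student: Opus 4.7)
The plan is to combine an $L^p$ Bernstein-type inequality for model spaces with an elementary mean-value identity and Minkowski's inequality in $L^p(\mu)$. The Bernstein inequality I have in mind is
$$
\|f'\|_p\le \|\Theta'\|_\infty\,\|f\|_p,\qquad f\in \K_\Theta^p,
$$
the standard quantitative consequence of the hypothesis $\Theta'\in L^\infty(\R)$, and the geometry of $\mu$ will enter only through the trivial bound $\mu([t-\delta,t])\le\delta\,D_\mu(\delta)$.

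First I would write, for each $x\in\R$,
$$
f(x)=\frac{1}{\delta}\int_x^{x+\delta}f(t)\,\mathrm{d}t+\frac{1}{\delta}\int_x^{x+\delta}\bigl(f(x)-f(t)\bigr)\,\mathrm{d}t,
$$
and use $|f(x)-f(t)|\le\int_x^{x+\delta}|f'(s)|\,\mathrm{d}s$ for $t\in[x,x+\delta]$ to deduce the pointwise bound
$$
|f(x)|\le\frac{1}{\delta}\int_x^{x+\delta}|f(t)|\,\mathrm{d}t+\int_x^{x+\delta}|f'(s)|\,\mathrm{d}s.
$$
Taking $L^p(\mu)$-norms and applying Minkowski's inequality then reduces the proof to bounding the two pieces $A(x):=\delta^{-1}\int_x^{x+\delta}|f(t)|\,\mathrm{d}t$ and $B(x):=\int_x^{x+\delta}|f'(s)|\,\mathrm{d}s$ separately.

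Each of these is handled by the same recipe: apply Jensen's inequality in the inner integral (using the probability measure $\delta^{-1}\mathbf{1}_{[x,x+\delta]}(t)\,\mathrm{d}t$ and convexity of $s\mapsto s^p$), then apply Fubini to rewrite the double integral as $\delta^{-1}\int_\R\phi(t)^p\,\mu([t-\delta,t])\,\mathrm{d}t$, and finally bound $\mu([t-\delta,t])\le\delta D_\mu(\delta)$. This gives $\|A\|_{L^p(\mu)}\le D_\mu(\delta)^{1/p}\|f\|_p$ and $\|B\|_{L^p(\mu)}\le\delta\,D_\mu(\delta)^{1/p}\|f'\|_p$; the Bernstein inequality converts the latter into $\delta\|\Theta'\|_\infty D_\mu(\delta)^{1/p}\|f\|_p$, and summing and then raising to the $p$-th power yields exactly $(1+\delta\|\Theta'\|_\infty)^p D_\mu(\delta)\|f\|_p^p$.

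The main obstacle is really the availability of the Bernstein inequality with the sharp $L^p$ constant $\|\Theta'\|_\infty$; once that is in hand, the rest is a one-line averaging trick followed by two routine applications of Jensen and Fubini. A secondary technical point is to justify the use of Newton--Leibniz to bound $|f(x)-f(t)|$, but this is unproblematic because $\Theta'\in L^\infty(\R)$ forces $\Theta$ to be meromorphic with zeros in $\C^+$, so any $f\in\K_\Theta^p$ is analytic on the whole real line and, by the Bernstein inequality, absolutely continuous with derivative in $L^p(\R)$.
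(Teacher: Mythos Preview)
Your proof is correct and rests on the same key ingredient as the paper's, namely Dyakonov's Bernstein inequality $\|f'\|_p\le\|\Theta'\|_\infty\|f\|_p$, combined with the elementary splitting of $f(x)$ into a local average plus a remainder controlled by $\int|f'|$. The implementations differ, however. The paper partitions $\R$ into fixed intervals $I_k=[k\delta,(k+1)\delta)$, selects $t_k\in I_k$ with $|f(t_k)|=\sup_{I_k}|f|$, bounds $\int|f|^p\,\mathrm{d}\mu\le M(\delta)\sum_k|f(t_k)|^p$, and then invokes a separate discrete lemma giving $\bigl(\sum_k|f(t_k)|^p\bigr)^{1/p}\le\delta^{-1/p}\|f\|_p+\delta^{1-1/p}\|f'\|_p$. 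You instead stay continuous: a pointwise bound $|f(x)|\le A(x)+B(x)$, Jensen on each term, and Tonelli to swap integrals so that $\mu$ enters only via $\mu([t-\delta,t])\le\delta D_\mu(\delta)$. Your route is slightly slicker---no auxiliary lemma, no choice of maximal points---while the paper's discrete partition is closer to sampling-theoretic intuition and adapts more directly to the variable-length intervals needed for one-component inner functions in Section~\ref{Sec:6}. Both routes give exactly the same constant.
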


The proof is based on a Bernstein-type inequality for model spaces 
$\K^p_\Theta$ with $\Theta'\in L^\infty(\R)$ which is due to Dyakonov \cite{dyak, dyak1}.

\begin{theorem}[Dyakonov]
If $\Theta'\in L^\infty(\R)$,  $1\leq p \leq \infty$, and $f\in \K_\Theta^p$, then 
$$\|f'\|_p \le \|\Theta'\|_\infty\|f\|_p.$$
\end{theorem}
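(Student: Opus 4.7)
My strategy is to exploit the factorization structure of elements of $\K_\Theta^p$. For $f \in \K_\Theta^p = \H^p \cap \Theta\overline{\H_0^p}$, there exists $g \in \H_0^p$ satisfying $\|g\|_p = \|f\|_p$ and $f(x) = \Theta(x)\overline{g(x)}$ a.e.\ on $\R$ (the norm equality comes from $|\Theta|=1$ a.e.\ on $\R$). Differentiating this boundary identity gives
\begin{equation*}
f'(x) = \Theta'(x)\overline{g(x)} + \Theta(x)\overline{g'(x)}, \qquad \text{a.e.\ } x \in \R.
\end{equation*}
Writing $\Theta = e^{i\varphi}$ and $g = |g|e^{i\psi}$ on $\R$ (with $\varphi,\psi$ real), one has $f = |g|e^{i(\varphi-\psi)}$ on $\R$, and a direct computation yields
\begin{equation*}
|f'(x)|^2 = \bigl(|g|'(x)\bigr)^2 + \bigl(\varphi'(x) - \psi'(x)\bigr)^2 |g(x)|^2.
\end{equation*}
This identity is the crux: the two summands defining $f'$ combine through their \emph{phases} rather than additively in modulus, exposing the cancellation hidden by the triangle inequality (which would only yield the useless bound $|f'| \leq \|\Theta'\|_\infty|f| + |g'|$).

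To conclude, it would suffice to dominate the right-hand side by $\|\varphi'\|_\infty^2 |g(x)|^2$, which would give the $L^\infty$ case directly and, by integration, all $L^p$ cases with the sharp constant. This reduces the task to an inequality of the form $(|g|'(x))^2 \leq [\|\varphi'\|_\infty^2 - (\varphi'(x) - \psi'(x))^2]|g(x)|^2$, a Bernstein-type bound for $g \in \H_0^p$ coupled to the inner function $\Theta$. For $p = 2$, I would verify it using the Clark orthonormal basis $\{k_{x_n}^\Theta/\|k_{x_n}^\Theta\|\}$ of $\K_\Theta^2$, together with $\|k_{x_n}^\Theta\|^2 = \varphi'(x_n)/(2\pi) \leq \|\Theta'\|_\infty/(2\pi)$ (from \eqref{eq:norm-Kt}), and controlling the action of $D=d/dx$ in this basis via integration by parts against reproducing kernels. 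For general $p$, extension proceeds via atomic decomposition of $\K_\Theta^p$ into Clark-kernel atoms.

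The main obstacle is twofold. First, $g'$ is not a priori in $L^p(\R)$, so the boundary differentiation and any subsequent manipulations must be justified; I would do this by proving the inequality first on the dense subset of $\K_\Theta^p$ spanned by reproducing kernels $k_z^\Theta$, $z \in \C^+$, where all expressions are smooth and rapidly decaying, and then extending by density using $L^p$-continuity of the factorization $f \mapsto g$. Second, a naive duality bound $\|f'\|_p = \sup_{h \in \K_\Theta^q,\,\|h\|_q\le 1}|\langle f', h\rangle|$ is circular because it invokes Bernstein on the dual exponent. Achieving the constant $\|\Theta'\|_\infty$ (without a factor involving the Riesz projection norm) requires pairing directly against $\K_\Theta^q$ and exploiting the vanishing of $\int \Theta\overline{g'}\,\overline{h}\,dx$ for $h \in \K_\Theta^q$, obtained via the analyticity of $g$ on $\C^+$ and the fact that $\overline{\Theta}h \in \overline{\H_0^q}$. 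This cancellation is precisely what turns the phase identity above into an honest $L^p$ inequality.
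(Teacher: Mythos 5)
Your reduction step is where the argument breaks. Dominating $|f'(x)|^2=(|g|'(x))^2+(\varphi'(x)-\psi'(x))^2|g(x)|^2$ by $\|\varphi'\|_\infty^2|g(x)|^2$ is exactly the pointwise inequality $|f'(x)|\le\|\Theta'\|_\infty|f(x)|$ for every $x\in\R$, and this is false. Take $\Theta(z)=e^{2icz}$, so that $\K_\Theta^2=e^{icz}PW_c^2(\R)$ and $\|\Theta'\|_\infty=2c$, and let $f(z)=e^{icz}\sinc(cz)$. At $x_0=\pi/c$ one has $f(x_0)=0$ but $f'(x_0)\neq 0$, so no pointwise bound of $|f'|$ by a multiple of $|f|$ can hold; equivalently, your requested inequality $(|g|')^2\le\bigl[\|\varphi'\|_\infty^2-(\varphi'-\psi')^2\bigr]|g|^2$ fails wherever $g$ has a simple real zero (the left side is $|g'(x)|^2>0$ there while the right side vanishes). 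Bernstein-type inequalities of this kind are intrinsically norm statements: $|f'(x)|$ must be controlled by an average of $|f|$, not by $|f(x)|$ itself. Consequently the later steps (Clark-basis verification for $p=2$, atomic decomposition for general $p$) are attempts to establish a false intermediate claim, quite apart from the fact that they are only sketched and that the differentiation of $|g|$ and of the phase $\psi$ is itself problematic at zeros of $g$.

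For comparison, the paper's proof avoids this trap by starting from the representation $f'(x)=2\pi i\int_\R f(t)\bigl(\overline{k_t^\Theta(x)}\bigr)^2\,\mathrm{d}t$. H\"older's inequality together with $\int_\R|k_x^\Theta(t)|^2\,\mathrm{d}t=|\Theta'(x)|/(2\pi)$ gives $|f'(x)|^p\le 2\pi\,\|\Theta'\|_\infty^{p/q}\int_\R|f(t)|^p|k_x^\Theta(t)|^2\,\mathrm{d}t$ --- a bound by a weighted average of $|f|^p$ rather than by $|f(x)|^p$ --- and integrating in $x$ via Fubini and $\int_\R|k_x^\Theta(t)|^2\,\mathrm{d}x=|\Theta'(t)|/(2\pi)$ yields the constant $\|\Theta'\|_\infty^p$. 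If you wish to salvage the factorization $f=\Theta\overline{g}$, note that you cannot discard the term $\Theta\overline{g'}$ pointwise; you would have to bound $\|g'\|_p$ globally, and since $g$ is essentially again an element of $\K_\Theta^p$, that route is circular.
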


Dyakonov proved this inequality up to some constant $C(p)$ on the right-hand side. Here we present a very simple proof
with the constant 1 which follows the method from \cite{B4} and is based on the formula  
\begin{equation}
\label{cont}
f'(x)= 2\pi i \int_\R f(t)\big(\overline{k_t^\Theta(x)}\big)^2\,\mbox{d}t,\qquad f\in \K_\Theta^p, \ x\in \R.
\end{equation}  

\begin{proof}[Proof of Dyakonov's theorem] 
For $1<p<\infty$, it follows from \eqref{cont}, \eqref{eq:norm-Kt} and the H\"older inequality that 
$$
\begin{aligned}
|f'(x)|^p & \le 2\pi \int_{\mathbb{R}} |f(t)|^p |k_x^\Theta(t)|^2\, \mbox{d}t \bigg(2\pi  \int |k_x^\Theta(t)|^2 \,\mbox{d}t \bigg)^{p/q} \\
& = 2\pi |\Theta'(x)|^{p/q} \int_{\mathbb{R}} |f(t)|^p |k_x^\Theta(t)|^2 \,\mbox{d}t \le 2\pi \|\Theta'\|_\infty^{p/q}
\int_{\mathbb{R}} |f(t)|^p |k_x^\Theta(t)|^2 \,\mbox{d}t.
\end{aligned}
$$
Since $\int_{\mathbb{R}} |k_x^\Theta(t)|^2 dx = |\Theta'(t)|/(2\pi)$, we get the result 
integrating over $x$. The cases $p=1$ or $p=\infty$ follow trivially from  \eqref{cont}.
\end{proof}

We will also need the following elementary lemma.

\begin{lemma} Let  $f,  f'\in L^p(\R)$, $1\le p<\infty$. Then for any 
$t_k \in [k\delta, (k+1)\delta] =:I_k$, $k\in{\mathbb Z}$, one has
\begin{equation}
\label{chev}
\bigg( \sum_{k\in{\mathbb Z}} |f(t_k)|^p\bigg)^{1/p} \le \delta^{-1/p} \|f\|_p + \delta^{1-1/p} \|f'\|_p.
\end{equation}
\end{lemma}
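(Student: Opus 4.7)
The plan is to prove this by a standard Sobolev-type argument, controlling the point value $f(t_k)$ by an integral average over the interval $I_k$ plus the integral of $|f'|$ over $I_k$, then summing via Minkowski in $\ell^p$.

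First, for any $x \in I_k$, the fundamental theorem of calculus gives
$$
f(t_k) = f(x) + \int_x^{t_k} f'(s)\, \ud s,
$$
and hence $|f(t_k)| \le |f(x)| + \int_{I_k} |f'(s)|\, \ud s$, using $t_k, x \in I_k$. Averaging this inequality in $x$ over $I_k$ (i.e.\ integrating and dividing by $\delta$) yields
$$
|f(t_k)| \le \frac{1}{\delta} \int_{I_k} |f(x)|\, \ud x + \int_{I_k} |f'(s)|\, \ud s.
$$

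Next I would apply H\"older's inequality with the conjugate exponent $q = p/(p-1)$ to each of the two integrals on $I_k$ (of length $\delta$):
$$
\frac{1}{\delta}\int_{I_k} |f|\, \ud x \le \delta^{-1/p} \lp \int_{I_k} |f|^p\, \ud x\rp^{1/p}, \qquad
\int_{I_k} |f'|\, \ud s \le \delta^{1-1/p} \lp \int_{I_k} |f'|^p\, \ud s\rp^{1/p}.
$$
Raising the resulting bound on $|f(t_k)|$ to the $p$-th power and summing over $k$, then applying Minkowski's inequality in $\ell^p(\Z)$ to the two summands separately, yields
$$
\lp \sum_{k\in \Z} |f(t_k)|^p \rp^{1/p} \le \delta^{-1/p} \lp \sum_{k\in\Z} \int_{I_k} |f|^p\, \ud x\rp^{1/p} + \delta^{1-1/p} \lp \sum_{k\in\Z} \int_{I_k} |f'|^p\, \ud s \rp^{1/p}.
$$
Since the intervals $I_k$ tile $\R$, the two sums inside the parentheses are exactly $\|f\|_p^p$ and $\|f'\|_p^p$, which gives \eqref{chev}.

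There is no real obstacle here; the only thing worth checking is the exponent bookkeeping in the H\"older step (the factor $\delta^{1/q} = \delta^{1-1/p}$ combines with the $1/\delta$ prefactor to produce $\delta^{-1/p}$ for the first term, and directly produces $\delta^{1-1/p}$ for the second). Absolute continuity of $f$, needed to justify the fundamental theorem of calculus, is implicit in the assumption that $f'\in L^p$ is the distributional derivative of $f\in L^p$.
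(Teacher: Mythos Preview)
Your proof is correct and follows essentially the same route as the paper: both use the fundamental theorem of calculus to control $f(t_k)-f(t)$ by $\int_{I_k}|f'|$, H\"older's inequality to extract the factor $\delta^{1-1/p}$, and Minkowski to sum. The only cosmetic difference is the order of operations---you average pointwise and then apply $\ell^p$-Minkowski, whereas the paper writes $|f(t_k)|^p=\delta^{-1}\int_{I_k}|f(t_k)|^p\,\ud t$ and applies the triangle inequality in $L^p$ first---but the arguments are equivalent.
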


\begin{proof} From the triangle inequality, 
$$
\begin{aligned}
\bigg( \sum_{k\in{\mathbb Z}} & |f(t_k)|^p\bigg)^{1/p}  = 
\bigg( \frac{1}{\delta} \sum_{k\in{\mathbb Z}} \int_{I_k} |f(t_k)|^p \,\mbox{d}t \bigg)^{1/p} \\
& \le 
\bigg( \frac{1}{\delta} \sum_{k\in{\mathbb Z}} \int_{I_k} |f(t)|^p \,\mbox{d}t \bigg)^{1/p}
+
\bigg( \frac{1}{\delta} \sum_{k\in{\mathbb Z}} \int_{I_k} |f(t) - f(t_k)|^p \,\mbox{d}t\bigg)^{1/p}.
\end{aligned}
$$
Note that by H\"older's inequality
$$
\int_{I_k} |f(t_k) - f(t)|^p \,\mbox{d}t = \int_{I_k} \bigg|\int_t^{t_k} f'(s) \,\mbox{d}s \bigg|^p \,\mbox{d}t \le 
 \delta^{p/q} \int_{I_k} \int_t^{t_k} |f'(s)|^p \,\mbox{d}s \,\mbox{d}t \le \delta^p \int_{I_k} |f'(s)|^p \,\mbox{d}s.
$$
Summing these estimates we obtain \eqref{chev}.
\end{proof}

\begin{proof}[Proof of Theorem \ref{th:lp}]
Put $M(\delta) =  \sup_{x\in\R}\mu \big( [x, x+\delta) \big)=\delta D_\mu(\delta)$.
Let $t_k \in I_k$ be such that $|f(t_k)| = \sup_{I_k} |f|$. Then
$$
\int_{\R}|f(x)|^p\, \mbox{d}\mu(x) \le \sum_{k\in{\mathbb Z}} \mu\big([k\delta, (k+1)\delta)\big) \cdot |f(t_k)|^p  \le 
M(\delta) \big(\delta^{-1/p} \|f\|_p + \delta^{1-1/p} \|f'\|_p\big)^p.
$$
It follows from Dyakonov's theorem that
\begin{equation*}
\int_{\R}|f(x)|^p\, \mbox{d}\mu(x)  \le \frac{M(\delta)}{\delta} (1+ \delta \|\Theta'\|_\infty)^p \|f\|_p^p
\end{equation*}
as claimed.
%
\end{proof}

\begin{remark}\label{remark}
The dependence on the parameter $\delta$ and on $\|\Theta'\|_\infty$
in \eqref{gh} is sharp up to the numerical constant.
Indeed, for the classical Paley--Wiener space 
$PW_c^2(\R)$,  our method 
(applied to $PW_c^2(\R)$ in place of $\K^2_\Theta$ with $\Theta(z) = e^{2icz}$)
gives only a factor $(1+2\delta c)$, while the Donoho--Logan bound is $1+ \dfrac{c\delta}{\pi}$.

If  $\mu$ is given by $d\mu(x)=\mathbf{1}_T(x)dx$ and one considers the Paley--Wiener spaces,
it is common to choose $\delta$ to be the reciprocal of the bandwidth $2c$ and write the estimates 
in terms of the \emph{maximum Nyquist density} 
$$
D_{\max}(T,c):=\sup_{x\in\R}2c \cdot \big|T\cap [x,x+1/(2c)]\big| .
$$
Donoho and Logan's results then read s
$$
\|f\cdot \mathbf{1}_T\|_{p}^{p} \leq C_p \cdot D_{\max}(T,c)
\cdot \|f\|_p^p,
$$
for $p=1,2$, with constants $C_1=\sinc(1/4)^{-1}\approx 1.0105$ and $C_2=(1+1/2\pi)\approx 1.1592$,
while  our estimate \eqref{gh} gives $C_1=2$ and $C_2=4$.
\end{remark}
\bigskip


\section{A Donoho--Logan theorem for one component inner functions}
\label{Sec:6}

For general inner functions, even if we confine ourselves with meromorphic ones, 
one cannot expect to obtain estimates of Donoho--Logan  type considering intervals of a fixed length.
The reason is that the behaviour of inner functions on $\R$ can be very irregular. 
On the other hand, for meromorphic inner functions with a sublinear growth of the argument the assumption 
that the measure is uniformly bounded on intervals of a fixed length can be too strong.
It looks more natural then to consider the intervals where the change of the argument of $\Theta$ is
fixed, {\it i.e.}, consider the intervals $I=[a,b]$ such that $\phi(b) - \phi(a) = \delta$, where $\phi$ is an increasing continuous
argument for $\Theta $ on $\R$. There exists a class of inner functions for which such a generalization is possible. 
These are the so-called {\it one-component inner functions}, that is, those inner functions
for which the sublevel set 
$$
\Omega(\Theta, \vep) = \{z\in \mathbb{C}^+:  |\Theta(z)| <\vep\}
$$
is connected for some $\vep \in (0,1)$. One-component inner functions were introduced by 
W.\,S.~Cohn \cite{cohn} in connection to Carleson-type embeddings of model spaces 
and were subsequently studied by many authors (see, {\it e.g.}, \cite{Al1, Al2, B4, B5, nr}). 

Several important properties of one-component inner functions were obtained by
A.\,B.~Aleksandrov \cite{Al1}. Recall that the spectrum $\rho(\Theta)$ of $\Theta$ (see \eqref{spec})
is a closed set, and if we write $\R \setminus \rho(\Theta) = 
\cup_n J_n$, where $J_n$ are disjoint open intervals, then $\Theta$ admits an analytic continuation through 
any $J_n$. Hence, $\Theta$ has an increasing $C^\infty$ branch of the argument $\phi$ on each $J_n$.
It is shown in \cite{Al1} that  $\rho(\Theta)$ has zero Lebesgue measure and, if we additionally assume that
$\infty\in\rho (\Theta)$, the function $\dfrac{1}{\phi'} = \dfrac{1}{|\Theta'|}$ (defined as 0 on $\rho(\Theta)$)
is a Lipschitz function on $\R$. Moreover, it is shown in \cite{B4} that in this case for any $\vep\in (0,1)$ 
there exist positive constants $c_1,c_2$ depending on $\vep$ only such that
\begin{equation}
\label{dist}
c_1 |\Theta'(x)|^{-1} \le {\rm dist}\, (x, \Omega(\Theta, \vep)) \le c_2 |\Theta'(x)|^{-1}, \qquad x\in\R.
\end{equation}
Therefore, there exist $A, B>0$  such that
\begin{equation}
\label{dc}
A \le \frac{\phi'(s)}{\phi'(t)} \le B, \qquad |\phi(s)-\phi(t)| \le 1.
\end{equation}
These properties essentially characterize the class of one-component inner functions. 

Also in  \cite{B4} the following Bernstein type inequality was proved:
if $\Theta$ is a one-component inner function and  $1<p<\infty$, then 
\begin{equation}
\label{dc1}
\|f'/\Theta'\|_p \le C(\Theta, p) \|f\|_p, \qquad f\in K_\Theta^p.
\end{equation}

The next theorem applies to general one-component (not necessarily meromorphic) functions. 

\begin{theorem}
\label{th:onecomp}
Let $\Theta$ be a one-component inner function such that $\infty\in\rho(\Theta)$, $1<p<\infty$.
Let $\phi$ denote the branch of the argument of $\Theta$ which is an increasing $C^\infty$ function on
each subinterval of $\R \setminus \rho(\Theta)$. For a Borel measure $\mu$ on $\R$ and $\delta>0$ put 
$$
D_\mu^\Theta(\delta) = \sup\Big\{ \frac{\mu(I)}{|I|}: \ I=[a,b], \ \phi(b) - \phi(a) =\delta \Big\}.
$$
Then for any $f\in \K_\Theta^p$ we have
\begin{equation}
\label{gh2}
\int_\R |f|^p\, \mbox{d}\mu \le D_\mu^\Theta(\delta) (1+C\delta)^p\|f\|_p^p,
\end{equation}  
where the constant $C>0$ depends on $\Theta $ and $p$ only.
\end{theorem}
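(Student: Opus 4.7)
The plan is to mimic the Bernstein-based proof of Theorem~\ref{th:lp}, but to replace Dyakonov's unweighted inequality by Baranov's weighted version \eqref{dc1}, $\|f'/\Theta'\|_p\le C(\Theta,p)\|f\|_p$, which is the natural Bernstein inequality in the one-component setting (where $\Theta'$ is unbounded). Accordingly, the partition into intervals of fixed Lebesgue length used there is replaced by an adapted partition $\{a_k\}_{k\in\Z}$ of $\R$ (well-defined off the null set $\rho(\Theta)$) with $\phi(a_{k+1})-\phi(a_k)=\delta$, and I would treat the regime $\delta\le 1$ first, where the near-constancy of $\phi'$ on each $I_k=[a_k,a_{k+1}]$ follows directly from \eqref{dc}.

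Let $t_k\in I_k$ maximize $|f|$ and set $M_k=\int_{I_k}|f'(s)|\,\mathrm{d}s$. From $|f(t_k)|\le|f(t)|+M_k$ for $t\in I_k$, Minkowski's inequality applied first in $L^p(I_k,\mathrm{d}t)$ and then in $\ell^p(k)$ gives
\begin{equation*}
\Big(\sum_{k\in\Z}|I_k|\,|f(t_k)|^p\Big)^{1/p}\le\|f\|_p+\Big(\sum_{k\in\Z}|I_k|\,M_k^p\Big)^{1/p}.
\end{equation*}
Writing $M_k=\int_{I_k}(|f'|/\phi')\,\phi'\,\mathrm{d}s$ and applying Hölder's inequality with respect to the measure $\phi'(s)\,\mathrm{d}s$, which has mass $\delta$ on $I_k$, I obtain $M_k^p\le\delta^{p-1}\int_{I_k}(|f'|/\phi')^p\,\phi'(s)\,\mathrm{d}s$. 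The key observation is that \eqref{dc} forces $\sup_{I_k}\phi'\le B\delta/|I_k|$ (since $\int_{I_k}\phi'\,\mathrm{d}s=\delta$ and $\phi'$ varies by at most a factor $B/A$ on $I_k$), whence
\begin{equation*}
|I_k|\,M_k^p\le B\delta^p\int_{I_k}(|f'|/\phi')^p\,\mathrm{d}s.
\end{equation*}
Summing over $k$ and invoking \eqref{dc1} bounds $\sum_k|I_k|M_k^p$ by $B\,C(\Theta,p)^p\,\delta^p\,\|f\|_p^p$. Combining with the Minkowski display yields $\sum_k|I_k|\,|f(t_k)|^p\le(1+C\delta)^p\|f\|_p^p$ with $C=B^{1/p}C(\Theta,p)$, and the proof in this regime is closed by
\begin{equation*}
\int_\R|f|^p\,\mathrm{d}\mu\le\sum_{k\in\Z}\mu(I_k)\,|f(t_k)|^p\le D_\mu^\Theta(\delta)\sum_{k\in\Z}|I_k|\,|f(t_k)|^p.
\end{equation*}

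The main obstacle is the mismatch between the \emph{weighted} Bernstein inequality \eqref{dc1} and the \emph{unweighted} density $D_\mu^\Theta(\delta)$; the one-component assumption closes the gap precisely through \eqref{dc}, which allows $|I_k|\cdot\sup_{I_k}\phi'$ to be traded for a multiple of $\delta$ and justifies the passage from the weight $\phi'(s)\,\mathrm{d}s$ back to $\mathrm{d}s$. Two secondary technical points are that $\rho(\Theta)$ is Lebesgue-null, so the partition is harmless and every $f\in\K_\Theta^p$ extends analytically off $\rho(\Theta)$, and that the regime $\delta>1$ is reduced to $\delta\le 1$ by subdividing each $I_k$ into $\lceil\delta\rceil$ pieces of $\phi$-length $\le 1$ and chaining \eqref{dc}; the hypothesis $1<p<\infty$ enters only through the validity of \eqref{dc1}.
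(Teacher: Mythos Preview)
Your proof is correct and follows essentially the same route as the paper's: the same adapted partition of $\R\setminus\rho(\Theta)$ into intervals of $\phi$-length $\delta$, the same Minkowski/H\"older decomposition of $|f(t_k)|$, the same use of \eqref{dc} to bound $|I_k|\cdot\sup_{I_k}\phi'$ by a multiple of $\delta$, and the same appeal to the weighted Bernstein inequality \eqref{dc1}. The only cosmetic differences are that the paper applies ordinary H\"older and bounds $\bigl(\int_{t_k}^t|\Theta'|^{p'}\bigr)^{p/p'}\le C_2\delta^p/|I_k|$ rather than applying H\"older against the measure $\phi'\,\mathrm{d}s$, and the paper does not make your case split at $\delta=1$ explicit (it simply invokes \eqref{dc} on each $I_k$).
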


\begin{proof}
The proof is analogous to the proof of Theorem \ref{th:lp}. 
It follows from \eqref{dist} that $\phi$ is unbounded on any connected component of
$\R \setminus \rho(\Theta)$. Therefore, we can
write $\R \setminus \rho(\Theta) = \cup_k I_k$,  where $I_k=[a_k, b_k]$ are intervals with disjoint interiors
such that $\phi(b_k) - \phi(a_k) =\delta$. Let $D=D_\mu^\Theta(\delta)$.
Any function $f\in  \K_\Theta^p$ admits an analytic continuation through $\R \setminus \rho(\Theta)$.
Choose $t_k\in I_k$ such that $|f(t_k)| = \max_{I_k} |f|$. Then we have 
$$
\begin{aligned}
\|f\|_{L^p(\mu)} & \le \bigg( \sum_k |f(t_k)|^p \mu(I_k)\bigg)^{1/p} \le D^{1/p}
\bigg( \sum_k  \int_{I_k} |f(t_k)|^p \,\mbox{d}t \bigg)^{1/p} \\
& \le M^{1/p} \bigg( \sum_k  \int_{I_k} |f(t)|^p \,\mbox{d}t \bigg)^{1/p} +
D^{1/p} \bigg( \sum_k  \int_{I_k} |f(t) -f(t_k)|^p \,\mbox{d}t \bigg)^{1/p}.
\end{aligned}
$$
Furthermore, if $p'$ is the dual index to $p$, $\dfrac{1}{p}+\dfrac{1}{p'}=1$, then
$$
\begin{aligned}
\sum_k  \int_{I_k} |f(t) -f(t_k)|^p \,\mbox{d}t & =
\sum_k  \int_{I_k} \bigg| \int_{t_k}^t f'(s) \,\mbox{d}s \bigg|^p \,\mbox{d}t  \\
& \le \sum_k  \int_{I_k} \int_{t_k}^t \frac{|f'(s)|^p}{|\Theta'(s)|^p} \,\mbox{d}s
\bigg( \int_{t_k}^t  |\Theta'(s)|^{p'} \,\mbox{d}s\bigg)^{p/p'} \,\mbox{d}t.
\end{aligned}
$$ 
For any $k$ there exists $s_k\in I_k$ such that $\delta = \phi(b_k) - \phi(a_k)  = \phi'(s_k) \cdot |I_k|$.
In view of \eqref{dc} we have $|\Theta'(s)| \cdot|I_k| \le C_1 \delta$, $s\in I_k$, whence
$$
\bigg( \int_{t_k}^t  |\Theta'(s)|^q \,\mbox{d}s\bigg)^{p/p'}
\le C_2 \bigg( \frac{\delta^{p'}}{|I_k|^{p'-1}}\bigg)^{p/p'}  = 
C_2\frac{\delta^p}{|I_k|}.
$$
for some constants $C_1, C_2>0$. Thus,
\begin{align*}
\sum_k  \int_{I_k} \int_{t_k}^t \frac{|f'(s)|^p}{|\Theta'(s)|^p} \,\mbox{d}s
\bigg( \int_{t_k}^t  |\Theta'(s)|^{p'} \,\mbox{d}s\bigg)^{p/p'} \,\mbox{d}t &\le C_2\frac{\delta^p}{|I_k|} 
\sum_k  \int_{I_k} \int_{I_k} \frac{|f'(s)|^p}{|\Theta'(s)|^p} \,\mbox{d}s \,\mbox{d}t \\ &\le C_3 \delta^p \|f\|^p_p
\end{align*}
by \eqref{dc1}. We have shown that
$$
\|f\|_{L^p(\mu)} \le D^{1/p} (1+ C_4\delta)\|f\|_p,
$$
which proves the theorem.
\end{proof}

\subsection*{Acknowledgements}  M.S. would like to acknowledge the support of the Austrian Science Fund FWF through the projects J-4254, and Y-1199.

The work of A.B. 
was supported by 
the Russian Science Foundation project 19-71-30002.

The research of K.K. is supported by the project ANR-18-CE40-0035.

Ce travail a bénéficié d'une aide de l'\'Etat attribu\'e \`a l'Universit\'e de Bordeaux en tant qu'Initiative d'excellence, au titre du plan France 2030.

\bibliographystyle{plain}

\begin{thebibliography}{11}
 

\bibitem{abspe18-sieve}
L.~D. Abreu and M.~Speckbacher.
\newblock Donoho-{L}ogan large sieve principles for modulation and polyanalytic   {F}ock spaces.
\newblock {\em Bull. Sci. Math.},  {\bf 171} (2021),   103032.

\bibitem{abspe22}
L.~D. Abreu and M.~Speckbacher.
\newblock Donoho-Logan large sieve principles for the continuous wavelet transform.
\newblock {\em  arXiv:2210.13056v1}, (2022).

\bibitem{AC} 
P.\,R.  Ahern and D.\,N. Clark.
\newblock Radial  limits  and  invariant  subspaces. 
\newblock {\em Amer. J. Math}, {\bf 92} (1970), 332--342.

\bibitem{Al1}
A.\,B. Aleksandrov. 
\newblock  Embedding theorems for coinvariant subspaces of the shift operator. II. Zap.
\newblock {\em Nauchn. Sem. St.-Petersburg. Otdel. Mat. Inst. Steklov. \textup(POMI\textup)},
{\bf 262} (1999), 5--48; translation in {\em J.  Math. Sci.}, {\bf 110} (2002), 2907--2929.

\bibitem{Al2}
A.\,B. Aleksandrov. 
\newblock  On embedding theorems for coinvariant subspaces of the shift operator. I.
\newblock  {\em Oper. Theory: Adv. Appl.}, {\bf 113} (2000), 45--64.


\bibitem{B4}
A.\,D. Baranov.
\newblock Bernstein-type inequalities for shift-coinvariant subspaces and their applications to Carleson embeddings. 
\newblock {\em J. Funct. Anal.}, {\bf 223} (2005), 116--146. 

\bibitem{B5}
A.\,D. Baranov. 
\newblock Embeddings of model subspaces of the Hardy class: compactness and Schatten--von Neumann ideals. 
\newblock (Russian) {\em Izv. Ross. Akad. Nauk Ser. Mat.}, {\bf 73} (2009), no. 6, 3--28; 
translation in {\em Izv. Math.}, {\bf 73 } (2009), no. 6, 1077--1100.

\bibitem{Bo}
R.\,P. Boas.
\newblock Entire functions bounded on a line.
\newblock {\em Duke Math. J.,} {\bf 6} (1940), 148--169.

\bibitem{de}
L. de Branges.
\newblock {\em Hilbert Spaces of Entire Functions}.
Englewood  Cliffs (NJ), (1968).







\bibitem{C}
D. N. Clark.
\newblock One-dimensional perturbations of restricted shifts. 
\newblock {\em J. Anal. Math.}, {\bf 25} (1972), 169--191.

\bibitem{cohn}
W.\,S. Cohn.  \newblock Carleson measures for functions orthogonal to invariant subspaces. 
\newblock {\em  Pacific J. Math.}, {\bf 103} (1982), 347--364.

\bibitem{dolo92}
D.~L. Donoho and B.~F. Logan.
\newblock Signal recovery and the large sieve.
\newblock {\em SIAM J. Appl. Math.}, {\bf 52} (1992), no. 2,  577--591.

\bibitem{dyak}
K. M. Dyakonov.
\newblock Entire functions of exponential type and model subspaces in $H^p$.
\newblock {\em J. Math. Sci.}, {\bf 71} (1994), no. 1, 2222--2233.

\bibitem{dyak1}
K. M. Dyakonov. 
\newblock Differentiation in star-invariant subspaces. I: Boundedness and compactness.
\newblock {\em  J. Funct. Anal.}, {\bf 192} (2002), no. 2, 364--386.

\bibitem{huli}
S. Husain and F. Littmann.
\newblock Concentration inequalities for Paley-Wiener spaces.
\newblock {\em  arXiv:2210.10029v2},  (2022).



\bibitem{HJK}
A.  Hartmann, Ph. Jaming, and K.  Kellay. 
\newblock Quantitative estimates of sampling constants in model spaces. 
\newblock {\em Amer. J. Math.} {\bf 142} (2020), no. 4, 1301-1326.

\bibitem{jasp1}
Ph. Jaming and M. Speckbacher.
\newblock Planar sampling sets for the short-time Fourier transform.
\newblock {\em Constr. Approx.}, {\bf 53}  (2021), 479--502.

 
\bibitem{L}
V. Lin.
\newblock On equivalent norms in the space of square integrable entire functions of exponential type. 
\newblock {Mat. Sb. (N.S.), } {\bf 67}   (1965), no.4,  586-608.

\bibitem{mont78}
H.~L. Montgomery.
\newblock The analytic principle of the large sieve.
\newblock {\em Bull. Amer. Math. Soc.}, {\bf 84} (1978), 547--567. 

\bibitem{nr}
A. Nicolau and A. Reijonen.  \newblock A characterization of one-component inner functions.
\newblock {\em Bull. London Math. Soc.}, {\bf 53} (2020), no. 1, 42--52.
\bibitem{Ni} 
N. K. Nikolski, Treatise on the Shift Operator, Springer-Verlag, Berlin-Heidelberg, 1986. 
%

\bibitem{STU}
L.~O. Silva, J.~H. Toloza, and A. Uribe.
Oversampling and undersampling in de Branges spaces arising from regular Schr\"odinger operators.
\newblock {\em Complex Anal.   Oper. Theory}, {\bf 13} (2019), 2303--2324.

\bibitem{TU}
J.~H. Toloza and A. Uribe. 
\newblock  Oversampling and aliasing in de Branges spaces arising from Bessel operators.
\newblock {\em J.  Math. Anal.  Appl.}, {\bf 492} (2020), 124428.

\bibitem{Va}
J.\,D. Vaaler.
\newblock  Some extremal functions in Fourier Analysis.
\newblock {\em Bull. Amer. Math. Soc.}, {\bf 12} (1985), 183--216.

\end{thebibliography}

\end{document}